\numberwithin{equation}{section}
\newtheorem{Theorem}{Theorem}[section]
\newtheorem{Lemma}{Lemma}[section]
\newtheorem{Proposition}{Proposition}[section]
\theoremstyle{definition}
\newtheorem{Remark}{Remark}[section]
\renewcommand{\r}{\rho}
\def\i{\varepsilon}
\renewcommand{\u}{{\bf u}}
\newcommand{\R}{{\mathbb R}}
\newcommand{\Dv}{{\rm div}}
\newcommand{\dl}{\delta}
\def\f{\frac}
\def\D{\Delta }
\def\hf1{^\f{1}{1-\xi^2}}
\def\be{\begin{equation}}
\def\en{\end{equation}}
\def\bs{\begin{split}}
\def\es{\end{split}}
\newcommand{\F}{{\mathtt F}}
\title{Formation of singularity for compressible viscoelasticity} 
\author{Xianpeng Hu and Dehua Wang}
\address{Courant Institute of Mathematical Sciences, New York University, New York, NY 10012.}
\email{xianpeng@cims.nyu.edu}
\address{Department of Mathematics, University of Pittsburgh,
                           Pittsburgh, PA 15260, USA.}
\email{dwang@math.pitt.edu}
\keywords{Compressible viscoelastic fluid, inviscid elasticity, local classical
solution, formation of singularity, blowup, breakdown.}
\subjclass{35A05, 76A10, 76D03, 76P05, 82B40, 82C40.}
\date{}
\begin{document}

\begin{abstract}
The formation of singularity and breakdown of  classical solutions to the three-dimensional
 compressible viscoelasticity and inviscid elasticity are considered.
For the compressible inviscid elastic fluids, the finite-time formation of singularity in classical solutions is proved for certain initial data. 
For the compressible viscoelastic fluids, a criterion in term of the temporal integral of the velocity gradient is obtained for the breakdown of smooth solutions.
\end{abstract}

\maketitle

\section{Introduction}

We are concerned with the formation of singularities in smooth solutions to the multi-dimensional partial differential equations of viscoelasticity, especially, of the viscoelastic fluids.
Viscoelastic fluids exhibit a combination of both fluid and solid characteristics,
and keep memory of their past deformations due to their ``elastic" nature.  
Viscoelastic fluids have a wide range of applications  and hence have received
a great deal of interests. Examples and applications of
viscoelastic fluids include from oil, liquid polymers, 
to  bioactive fluids,  and viscoelastic blood flow  past valves; see
\cite{ KM} for more applications. For the viscoelastic
materials, the competition between the kinetic energy and the
internal elastic energy through the special transport properties
of their respective internal elastic variables makes the materials
more intractable in  understanding their behavior, since any
distortion of microstructures, patterns or configurations in the
dynamical flow will involve the deformation tensor. For classical
simple fluids,  the internal energy can be determined solely by
the determinant of the deformation tensor; however, the internal
energy of complex fluids carries all the information of the
deformation tensor. The interaction between the
microscopic elastic properties and the macroscopic fluid motions
leads to the rich and complicated rheological phenomena in
viscoelastic fluids, and also causes formidable analytic and
numerical challenges in mathematical analysis.
For the viscoelastic materials with significant viscosities, the equations of the compressible viscoelastic fluids of Oldroyd type (\cite{Oldroyd1, Oldroyd2}) in three spatial dimensions take the
following form (\cite{CD, Gurtin, Joseph}): 
\begin{equation}\label{cve}
\begin{cases}
\r_t+\Dv(\r\u)=0,\\
(\r\u)_t+\Dv(\r\u\otimes\u)+\nabla P=\mu\D\u+(\mu+\lambda)\nabla\Dv\u+\Dv(\r\F\F^\top),\\
\F_t+\u\cdot\nabla\F=\nabla\u\,\F,
\end{cases}
\end{equation}
where $\r$ stands for the density, $\u\in \R^3$ the velocity, and
$\F\in M^{3\times 3}$ (the set of $3\times 3$ matrices)  the
deformation gradient, and 
$$P=A\r^\gamma, \quad A>0, \; \gamma>1,$$  is the pressure. 
The viscosity coefficients $\mu, \lambda$
are two constants satisfying 
\begin{equation}\label{v2} 
\mu\ge 0,\quad 3\lambda+2\mu\ge 0,
\end{equation}
which ensure that the operator
$-\mu\D\u-(\lambda+\mu)\nabla\Dv\u$ is a strongly elliptic
operator.  The notation $\u\cdot\nabla\F$ is understood to be
$(\u\cdot\nabla)\F$ and $\F^\top$ means the transpose matrix of $\F$. 
As usual, we call the first equation in
\eqref{cve} the continuity equation. For system \eqref{cve}, the
corresponding elastic energy is chosen to be  the special form of
the Hookean linear elasticity:
$$W(\F)=\frac{1}{2}|\F|^2,$$
which, however, does not reduce the essential difficulties for
analysis. The methods and results of this paper can be applied to
more general cases. 
In the physical regime of negligible viscosities,  \eqref{cve}  becomes the system of inviscid  compressible elasticity:
\begin{equation}\label{1}
\begin{cases}
\r_t+\Dv(\r\u)=0,\\
(\r\u)_t+\Dv(\r\u\otimes\u)+\nabla P=\Dv(\r\F\F^\top),\\
\F_t+\u\cdot\nabla\F=\nabla\u\,\F.
\end{cases}
\end{equation}
We refer the readers to \cite{CD, Gurtin, Joseph, LW, RHN} for more discussions and physical background on viscoelasticity.
This paper is devoted to the study of  formation of  singularities for
both the inviscid elastic flow \eqref{1} and the viscoelastic flow \eqref{cve}.

Without the deformation gradient $\F$, the system \eqref{cve}
becomes the compressible Navier-Stokes equations. There is a huge
literature about solutions to the compressible Navier-Stokes
equations; see Danchin \cite{Danchin}, Feireisl \cite{Feireisl}, Lions \cite{Lions}, and references therein. 
In particular, the global weak solutions were
constructed in \cite{Feireisl, Lions} for $\gamma>\f{3}{2}$, while
in \cite{Danchin} the global existence of strong solution was proved in the critical spaces when the solution is a small perturbation of the equilibrium.

When the deformation gradient $\F$ does appear, the system \eqref{cve} is much more complicated than the Navier-Stokes equations, although the equation
satisfied by the deformation gradient $\F$ is a transport equation
and is similar to the continuity equation. Fortunately, \eqref{cve}
inhibits a list of local conservation laws which make the analysis
for the global existence of strong solutions available. 
The local strong solutions to the compressible viscoelastic flow \eqref{cve} with large data were obtained in Hu-Wang \cite{HW1, HW3}, and the global
strong solutions to the compressible viscoelastic flow \eqref{cve} with small data in the Besov spaces were established in Hu-Wang \cite{HW2, HW4} and Qiang-Zhang \cite{Zhang}. 
We remark that for these local and global existence of strong solutions, the local conservation laws are crucial for the dissipation of the deformation gradient, and the property that the curl of the deformation gradient is of higher order (Lemma 2.1 in \cite{HW1}) is also very helpful.
For large initial data, the global existence of strong or weak solutions for \eqref{cve} is still an outstanding open problem.  

Regarding the global existence of weak solutions to \eqref{cve} with large
data, among all of difficulties, the rapid oscillation of the density
and the non-compatibility between the quadratic form and the weak convergence are of the main issues; namely,  the local conservation laws are not enough for the convergence of the
nonlinear term, especially for the term related to the deformation
gradient although those terms sit well in the framework of
div-curl  structure, and the higher integrability of the density or the deformation gradient is not available up to now. 

For a strong solution to \eqref{cve} with large data, although the local existence was proved in \cite{HW1}, we expect that it will break down in a finite time as for the strong solution to the compressible Navier-Stokes equations (\cite{BKM, Ponce, HX}).  The breakdown of smooth solutions is due to the lack of  control
of the hydrodynamic variables, for instance, the $L^\infty$ norm of
the gradient of the velocity or the $L^\infty$ norm of the
density. As it is well-known, the $L^\infty$ norm of the gradient
of the velocity controls the $L^\infty$ norm of the density and
the deformation gradient in the compressible viscoelastic fluids \eqref{cve}. In this paper, we will provide a criterion and justify the blowup phenomena. Comparing with the compressible Navier-Stokes equations,  the viscoelastic flow \eqref{cve} is more complicated and thus more delicate and new estimates are needed for the analysis of strong solutions. More precisely,  the main difficulty lies in the estimates
of the gradients of the density and the deformation gradient, and  the
estimates  on the  $L^\infty_t H^1_x$ bounds of $\nabla\r$ and $\nabla\F$ are crucial. 
We find a criterion for breakdown of strong solutions of \eqref{cve} in term of the temporal integral of the $L^\infty$ norm of the velocity gradient.

For the inviscid flow \eqref{1},
similar to the compressible Euler equations (Sideris \cite{ST2}), 
we expect that the smooth solution to the system \eqref{1} will develop singularities in a finite time.
We will first reformulate the system \eqref{1} into a symmetric hyperbolic system so that a local smooth solution can be obtained from \cite{Kato, Majda}. Then we will prove that
the smooth solution cannot exist globally in time under some
restrictions on the initial data; that is,  the finite-time formation of singularities  is essentially inevitable
provided that the initial  velocity in some region near the
origin is supersonic relative to the sound speed at infinity. 
The proof will follow the idea of Sideris \cite{ST2} with more subtle estimates on the deformation gradients.

For the incompressible viscoelastic flows and related models, there are many papers in literature on classical solutions (cf. \cite{CM, CZ, KP, LLZH2,
LZP} and the references therein). On the other hand, the global existence of weak solutions
to the incompressible viscoelastic flows with large initial data
is also an outstanding open question, although there are some
progress in that direction (\cite{LLZH, LM, LW}).
For the inviscid elastodynamics, see \cite{ST} and
their references on the global existence of classical solutions.

The rest of this paper is organized as follows. In Section 2, we
will explain the mechanism which will ensure a local existence of smooth solution to
the inviscid compressible elastic fluid and also provide a proof of the finite-time
formation of singularities. In Section 3, we will
consider the  compressible viscoelastic fluids and prove a blowup criterion in term of the 
the $L^\infty$ norm of the gradient of the velocity.
\bigskip

\section{The Inviscid Case}

In this section, we consider the formation of singularities in smooth solutions of the inviscid flow \eqref{1}
in $\R^3$ with sufficiently smooth initial data:
\begin{equation}\label{in}
(\r, \u, \F)|_{t=0}=(\r_0(x), \u_0(x), \F_0(x)),\qquad x\in\R^3.
\end{equation}
We assume that
\begin{equation} 
\r_0(x)>0 \quad \textrm{for all}\quad x\in\R^3,
\end{equation}
 there exist positive constants $\bar{\r}_0$ and $R>0$ such that
\begin{equation}
(\r_0(x), \u_0(x), \F_0(x))=(\bar{\r}_0,0, I) \quad \textrm{for all}\quad |x|\ge R,
\end{equation}
where $I$ is the $3\times 3$ identity matrix, and
\begin{equation}\label{a}
\Dv(\r_0 \F^\top_0)=0,
\end{equation}

One useful property of the deformation gradient $\F$ is the following (see Lemma 6.1 in \cite{HW2}):
\begin{Lemma}\label{div}
If $(\r,\u,\F)$ is a smooth solution of \eqref{1}, and $\r,\F$
initially satisfy \eqref{a},
then the following identity  holds for any  time:
\begin{equation}\label{div1}
\Dv(\r\F^\top)=0.
\end{equation}
\end{Lemma}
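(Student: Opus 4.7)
The idea is to derive a transport equation for the vector field $V := \Dv(\r\F^\top)$ that has exactly the form of the continuity equation, so that the vanishing of $V$ propagates from the initial data.

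First I would combine the continuity equation $\r_t+\Dv(\r\u)=0$ with the deformation equation $\F_t+\u\cdot\nabla\F=\nabla\u\,\F$ to obtain, in coordinates,
\begin{equation*}
\partial_t(\r F_{ij}) + u_k\partial_k(\r F_{ij}) + (\Dv\u)\,\r F_{ij} = \r F_{kj}\partial_k u_i,
\end{equation*}
writing $V_j = \partial_i(\r F_{ij})$ so that $V = \Dv(\r\F^\top)$. Next I would apply $\partial_i$ to this identity. The left-hand side produces the expected transport-type expression $\partial_t V_j + u_k\partial_k V_j + (\Dv\u)V_j$ together with two additional terms, $\partial_i u_k\,\partial_k(\r F_{ij})$ and $\r F_{ij}\,\partial_i\partial_k u_k$; the right-hand side yields $\partial_i(\r F_{kj})\,\partial_k u_i + \r F_{kj}\,\partial_i\partial_k u_i$.

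The crucial observation is a pair of symmetry cancellations. Renaming the dummy indices $i\leftrightarrow k$ and using $\partial_i\partial_k = \partial_k\partial_i$ shows that $\r F_{ij}\partial_i\partial_k u_k = \r F_{kj}\partial_i\partial_k u_i$, so the two second-order-in-$\u$ contributions cancel. The same relabelling turns $\partial_i u_k\,\partial_k(\r F_{ij})$ into $\partial_i(\r F_{kj})\,\partial_k u_i$, so the two first-order cross terms cancel as well. What remains is exactly
\begin{equation*}
\partial_t V_j + \Dv(V_j\u) = 0,
\end{equation*}
the same continuity-type equation satisfied by $\r$. Combined with $\r_t+\Dv(\r\u)=0$, this yields $\partial_t(V_j/\r) + \u\cdot\nabla(V_j/\r)=0$, so $V/\r$ is constant along the particle trajectories of $\u$. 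By the hypothesis \eqref{a}, $V_j|_{t=0} = \partial_i(\r_0 F_{0,ij}) = 0$, hence $V\equiv 0$ throughout the lifespan of the smooth solution, which is \eqref{div1}.

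Since the whole argument is a structural identity valid for classical solutions, there is no analytic obstacle; the only subtlety is organising the calculation so that the two pairs of commutator terms cancel cleanly after the index swap $i\leftrightarrow k$. One should also verify the standard assumption $\r>0$ so that dividing by $\r$ is legitimate, but this is guaranteed by the continuity equation and the positivity of $\r_0$ for as long as the classical solution persists.
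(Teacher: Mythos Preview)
Your argument is correct: deriving the conservation law $\partial_t(\r F_{ij})+\Dv(\r F_{ij}\u)=\r F_{kj}\partial_k u_i$, applying $\partial_i$, and using the index swap $i\leftrightarrow k$ to cancel both commutator pairs yields exactly $\partial_t V_j+\Dv(V_j\u)=0$, from which the propagation of $V=0$ follows. The paper does not actually supply a proof here---it cites Lemma~6.1 of \cite{HW2}---but the computation you carry out is precisely the standard one, so there is nothing further to compare.
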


Under the assumption \eqref{a} and using Lemma \ref{div}, we can rewrite
the system \eqref{1} for smooth solutions $(\r, \u, \F)$ with $\r>0$ as:
\begin{equation}\label{111}
\begin{cases}
\displaystyle \f{1}{\r c^2}\f{d \hat{P}}{dt}+\Dv\u=0,\\
\displaystyle \f{d\u}{dt}+\nabla \hat{P}=\F_{jk}\nabla_{x_j}\F_{ik}:=\sum_{k=1}^3(\F_k\cdot\nabla\F_k),\\
\displaystyle \f{d\F_{i}}{dt}=\nabla\u\,\F,
\end{cases}
\end{equation}
where 
$$\f{d}{dt}=\f{\partial}{\partial t}+\u\cdot\nabla,\quad
c^2=A\gamma(\gamma-1) \r^{\gamma-2}, \quad
\hat{P}=\f{A\gamma}{\gamma-1}\r^{\gamma-1},$$
 and $\F_i$ is the i-th column of the matrix $\F$.
Set 
$$
V=
\begin{bmatrix}
\hat{P}\\ \u\\  \F_1\\ \F_2\\ \F_3
\end{bmatrix}, \quad
A_0=
\begin{bmatrix}
\f{1}{\r c ^2}& 0\\
0&I_{12}
\end{bmatrix},
$$
and
$$
A_i=\begin{bmatrix}
\f{\u_i}{\r c^2}& e_i& 0& 0& 0\\
e_i^\top& \r\u_i I& -\F_{i1}I& -\F_{i2}I& -\F_{i3}I\\
0& -\F_{i1}I& \u_i I& 0& 0\\
0& -\F_{i2}I& 0&\u_i I& 0\\
0& -\F_{i3}I& 0& 0& \u_i I
\end{bmatrix}, \quad i=1, 2, 3,
$$
where $\{e_1,e_2, e_3\}$ is the standard basis of $\R^3$, $I_{12}$ is the $12\times 12$ identity matrix, $e_i^\top$ is the transpose of $e_i$, and $I$
is again the $3\times 3$ identity matrix. 
Then, in view of \eqref{111},  the system \eqref{1} can be written as a symmetric hyperbolic system of the form
\begin{equation}\label{sym}
A_0V_t+\sum_{i=1}^3A_i V_{x_i}=0,
\end{equation}
with $A_0(V)>0$ and $A_i(V)$ is a $13\times 13$ symmetric matrix for each
$i=1,2,3$. According to the well-known
result  in \cite{Kato, Majda},  the hyperbolic system of conservation laws \eqref{sym}
admits a local $C^1$ solution on some time interval $[0,T)$,
provided the initial data are sufficiently regular;  moreover,
$\r>0$ on $\R^3\times[0,T)$.

Since $\r_0(x)=\bar{\r}_0$,  $\u_0(x)=0$,  and $\F_0(x)=I$ for all $|x|\ge R$, then
$\sigma$, the sound speed at infinity, is given by
$$\sigma=\left(\f{\partial P}{\partial\r}(\bar{\r}_0)\right)^{\f{1}{2}}
=\left(A\gamma\bar{\r}_0^{\gamma-1}\right)^{\f{1}{2}}.$$ 
The following proposition is an immediate consequence of local energy
estimates (cf. \cite{ST1}). It simply states that the maximum
speed of propagation of the front of a smooth disturbances is
governed by $\sigma$.

\begin{Proposition}\label{p1}
If $(\r,\u, \F)\in C^1(\R^3\times[0,T))$ is a solution of
\eqref{1} and \eqref{in}, then 
$$(\r,\u, \F)=(\bar{\r}_0, 0, I) \text{ for all }
|x|\ge \sigma t+R \text{ and } 0\le t<T.$$
\end{Proposition}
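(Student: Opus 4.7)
\noindent\textit{Proof plan.} This is the standard finite-speed-of-propagation result for the symmetric hyperbolic system \eqref{sym}, and the plan is to adapt Sideris' local energy estimate \cite{ST1}. Let $\bar V := (\hat P(\bar \r_0), 0, e_1, e_2, e_3)^\top$ be the constant vector corresponding to the far-field state $(\bar \r_0, 0, I)$, and set $W := V - \bar V$. Because $\bar V$ solves \eqref{sym} trivially, $W$ satisfies
\begin{equation*}
A_0(V) W_t + \sum_{i=1}^{3} A_i(V) W_{x_i} = 0,
\end{equation*}
and $W(\cdot, 0) \equiv 0$ on $\{|x| \ge R\}$ by hypothesis. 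Multiplying by $W^\top$ and using the symmetry of the $A_\mu$ gives the pointwise identity
\begin{equation*}
\partial_t(W^\top A_0 W) + \sum_i \partial_{x_i}(W^\top A_i W) = W^\top Q W, \qquad \|Q\|_{L^\infty} \le C(\|V\|_{C^1}),
\end{equation*}
where $Q := \partial_t A_0 + \sum_i \partial_{x_i} A_i$.

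Fix an arbitrary point $(x_0, t_0)$ with $t_0 \in [0, T)$ and $|x_0| > \sigma t_0 + R$, and let $K := \{(x, t) : 0 \le t \le t_0,\ |x - x_0| \le \sigma(t_0 - t)\}$ be the backward space-time cone of slope $\sigma$ from $(x_0, t_0)$. For any $s \in [0, t_0]$ I would integrate the identity over $K \cap \{t \le s\}$ and apply the space-time divergence theorem. The lateral surface has outward unit normal $\tfrac{1}{\sqrt{1+\sigma^2}}(\hat r', \sigma)$ with $\hat r' := (x - x_0)/|x - x_0|$, and the bottom face $\{t = 0\}$ carries $W \equiv 0$ because $|x - x_0| \le \sigma t_0$ forces $|x| \ge |x_0| - \sigma t_0 > R$. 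The resulting identity is
\begin{equation*}
\int_{B_s} W^\top A_0(V) W\, dx + \frac{1}{\sqrt{1+\sigma^2}} \int_{L_s} W^\top \Bigl[\sigma A_0(V) + \sum_i A_i(V)\hat r'_i\Bigr] W\, dS = \int_{K\cap\{t\le s\}} W^\top Q W\, dx\,dt,
\end{equation*}
where $B_s$ denotes the top disk at time $s$ and $L_s$ the lateral piece up to time $s$. The crucial algebraic input is the spectral bound $\sigma A_0(\bar V) + \sum_i A_i(\bar V) \hat r'_i \ge 0$, i.e.\ $\sigma$ dominates every characteristic speed of \eqref{sym} at the constant state $\bar V$ in the radial direction. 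Granting this (and a routine bootstrap to pass from $\bar V$ to $V$ close to $\bar V$ on $L_s$), the lateral term is non-negative, so setting $E(s) := \int_{B_s} W^\top A_0(V) W\, dx$ one gets $E(s) \le C \int_0^s E(\tau)\, d\tau$, and Gronwall forces $E \equiv 0$. The positivity of $A_0$ then gives $W \equiv 0$ on $K$; in particular $V(x_0, t_0) = \bar V$, and as $(x_0, t_0)$ was arbitrary in the exterior of the forward cone, the claim follows.

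The main obstacle is verifying the spectral inequality at $\bar V$: one must compute the generalized eigenvalues of $\sum_i A_i(\bar V) \hat r'_i$ with respect to $A_0(\bar V)$ and check that they all lie in $[-\sigma, \sigma]$. This is the only place where the specific block structure of the matrices built in \eqref{sym} from the coupling between the acoustic and elastic modes enters; the rest of the argument is entirely generic for symmetric hyperbolic systems. The bootstrap that upgrades the bound from $V = \bar V$ to $V$ close to $\bar V$ on the lateral surface is standard and follows from a continuity argument, since by construction $W = V - \bar V$ vanishes on $L_0$ and any deviation of $V$ from $\bar V$ on $L_s$ would already have been recorded by a nontrivial $E(s)$.
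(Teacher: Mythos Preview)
Your approach coincides with what the paper intends: it offers no proof beyond citing the local energy estimate of Sideris \cite{ST1}, and the backward-cone Gronwall argument you outline is precisely that method. You have also correctly isolated the only non-generic ingredient, namely the spectral inequality $\sigma A_0(\bar V) + \sum_i A_i(\bar V)\hat r'_i \ge 0$ at the far-field state.

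The gap is that this inequality is \emph{false}, so the argument cannot close with cone slope $\sigma$. Linearize \eqref{1} about $(\bar\r_0,0,I)$ and look for plane waves proportional to $e^{i(\xi\cdot x-\omega t)}$ with $|\xi|=1$; eliminating the density and deformation amplitudes via the continuity and $\F$ equations reduces the momentum equation to $(\omega^2-1)\,\u = \sigma^2(\xi\cdot\u)\,\xi$, so the transverse (shear) modes travel with speed $\pm 1$ and the longitudinal acoustic--elastic mode with speed $\pm\sqrt{1+\sigma^2}$. The same values arise as the generalized eigenvalues of $\sum_i A_i(\bar V)\hat r'_i$ relative to $A_0(\bar V)$. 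Since $\sqrt{1+\sigma^2} > \sigma$, the lateral-boundary contribution in your energy identity is not sign-definite for a cone of slope $\sigma$, and the Gronwall step fails. The elastic stress genuinely raises the propagation speed at the far-field state; the pure acoustic speed $\sigma$ carried over from the Euler case \cite{ST2} is too small. The proposition (and your proof) becomes correct if $\sigma$ is replaced everywhere by $\sqrt{1+\sigma^2}$, a change that propagates harmlessly into the constants of Theorem~\ref{T1}.
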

 
As for the compressible Euler equations (Sideris \cite{ST2}), 
we expect that the smooth solution to the system \eqref{1} will develop singularities in a finite time.
The result in the following theorem shows that the $C^1$ solution to
\eqref{1} and \eqref{in} does not exist globally in time under some
restrictions on the initial data; more precisely, it states that
the finite-time formation of singularities in the three-dimensional
inviscid compressible elastic fluid is essentially inevitable
provided that the initial flow velocity, in some region near the
origin, is supersonic relative to the sound speed at infinity.

In order to state our result, we define
$$m(t)=\int_{\R^3}(\r(x,t)-\bar{\r}_0)dx,$$
$$\mathcal{F}(t)=\int_{\R^3}\r(x,t)x\cdot\u(x,t)dx,$$
$$\mathcal{E}(t)=\int_{\R^3}
\left(\f{1}{2}\r|\u|^2+\f{1}{2}\r|\F-I|^2+\f{P-P_0}{\gamma-1}\right)dx,$$
with
$$P_0=P(\bar\r_0)=A\bar\r_0^\gamma,$$
and
$$D(t)=\{x\in \R^3: |x|\le \sigma t+R\}.$$

\begin{Remark}\label{p1b}
Proposition \ref{p1} implies that the integrands of
 $m(t)$,  $\mathcal{F}(t)$, and $\mathcal{E}(t)$
 are identically equal to zero outside $D(t)$.
\end{Remark}

For two $3\times 3$ matrices $A$ and $B$, the following notations will be used:
$$A:B=\sum_{i,j=1}^3A_{ij}B_{ij},\quad|A|^2=\sum_{i,j=1}^3A_{ij}^2.$$

\begin{Lemma}\label{ei}
Let  $(\r,\u, \F)$ be a $C^1$ solution of \eqref{1} with initial data
\eqref{in}-\eqref{a}.
Then $\mathcal{E}(t)$ is conserved; that is,
\begin{equation}\label{EE}
\mathcal{E}'(t)=0,\qquad \mathcal{E}(t)=\mathcal{E}(0),
\end{equation}
 for all $t>0$.
\end{Lemma}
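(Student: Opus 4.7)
The plan is to differentiate $\mathcal{E}(t)$ under the integral sign, use each of the three PDEs in \eqref{1} to rewrite the time derivatives as divergences plus source terms, and then integrate by parts. Proposition \ref{p1} together with Remark \ref{p1b} guarantees that all integrands in the definition of $\mathcal{E}(t)$, and all flux terms produced below, are compactly supported in $D(t)$, so every boundary contribution vanishes and the manipulations are fully justified for a $C^1$ solution.

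For the kinetic term, I would multiply the momentum equation by $\u$ and combine with $|\u|^2/2$ times the continuity equation to obtain the standard identity
\be
\del_t\bigl(\tfrac{1}{2}\r|\u|^2\bigr)+\Dv\bigl(\tfrac{1}{2}\r|\u|^2\u\bigr)=-\u\cdot\na P+\u\cdot\Dv(\r\F\F^\top).
\en
For the pressure term, writing $\del_t P+\u\cdot\na P=-\gamma P\Dv\u$ (immediate from $P=A\r^\gamma$ and the continuity equation) and integrating over $\R^3$ gives $\f{d}{dt}\int\f{P-P_0}{\gamma-1}dx=-\int P\Dv\u\,dx$, and this combines with $\int\u\cdot\na P\,dx=-\int P\Dv\u\,dx$ so that the pressure work cancels. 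After integrating by parts in $\int\u\cdot\Dv(\r\F\F^\top)dx$, the sum of the kinetic and pressure contributions reduces to $-\int \r\,\na\u:(\F\F^\top)\,dx$.

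For the elastic term, I would use $\del_t(\F-I)+(\u\cdot\na)(\F-I)=\na\u\,\F$ (since $I$ is constant), multiply by $\r(\F-I)$, and add $|\F-I|^2/2$ times the continuity equation to obtain
\be
\del_t\bigl(\tfrac{1}{2}\r|\F-I|^2\bigr)+\Dv\bigl(\tfrac{1}{2}\r|\F-I|^2\u\bigr)=\r(\F-I):(\na\u\,\F).
\en
Splitting the right-hand side, one piece is $\r\F:(\na\u\,\F)=\r\,\na\u:(\F\F^\top)$, which exactly cancels the kinetic/pressure remainder. The leftover piece is $-\int \r\,\tr(\na\u\,\F)\,dx=-\int \r(\del_{x_j}\u_i)\F_{ji}\,dx$.

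The only nontrivial step is to dispose of this last term, and it is precisely here that Lemma \ref{div} intervenes: integrating by parts gives $-\int \r(\del_{x_j}\u_i)\F_{ji}\,dx=\int \u_i\,\del_{x_j}(\r\F_{ji})\,dx=\int \u\cdot\Dv(\r\F^\top)\,dx$, which vanishes by \eqref{div1}. This is the main (and genuinely elastic) obstacle, since without the persistence of $\Dv(\r\F^\top)=0$ the natural energy would not be conserved; the rest is bookkeeping. Combining the three computations yields $\mathcal{E}'(t)=0$, and $\mathcal{E}(t)=\mathcal{E}(0)$ follows by integration in $t$.
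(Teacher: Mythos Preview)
Your proof is correct and uses the same essential ingredients as the paper's: the standard kinetic/pressure energy identity, the transport structure for the elastic energy, and crucially Lemma \ref{div} to annihilate the leftover term $\int_{\R^3}\r\F^\top:\nabla\u\,dx$.

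The organizational difference is that you work with $\tfrac12\r|\F-I|^2$ from the outset, obtaining the source $\r(\F-I):(\nabla\u\,\F)$ and splitting it directly into the cancelling piece $\r\F:(\nabla\u\,\F)$ and the piece $-\r\,\tr(\nabla\u\,\F)$ that vanishes upon integration by Lemma \ref{div}. The paper instead computes $\partial_t(\r|\F|^2)$ first, then uses the algebraic identity $\r|\F|^2=\r|\F-I|^2+2\r\,\tr(\F-I)+3(\r-\bar\r_0)+3\bar\r_0$ and shows separately that $\int_{\R^3}(\r-\bar\r_0)\,dx$ and $\int_{\R^3}\r\,\tr(\F-I)\,dx$ are individually conserved (the latter again via Lemma \ref{div}). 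Your route is shorter and avoids this detour; the paper's route has the small side benefit of isolating the conservation of $m(t)$ and of $\int_{\R^3}\r\,\tr(\F-I)\,dx$, both of which are reused later in the proof of Theorem \ref{T1}.
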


\begin{proof}
Multiplying the second equation in \eqref{1} by $\u$ and using the
first equation in \eqref{1} yield
\begin{equation}\label{222}
\begin{split}
\f{d}{dt}\int_{\R^3}\left(\f{1}{2}\r|\u|^2+\f{P-P_0}{\gamma-1}\right)dx
=-\int_{\R^3}\r\F\F^\top:\nabla\u dx.
\end{split}
\end{equation}
On the other hand, from the third and then the first equations of \eqref{1}, one deduces that
\begin{equation*}
\begin{split}
\f{\partial}{\partial t}\left(\r |\F|^2\right)&=\f{\partial \r}{\partial t}|\F|^2+2\r \F:\f{\partial \F}{\partial t}\\
&=\f{\partial \r}{\partial t}|\F|^2+2\r \F:(\nabla\u \,\F-\u\cdot\nabla \F)\\
&=\f{\partial \r}{\partial t}|\F|^2+2\r \F:(\nabla\u \,\F)-\r\u\cdot\nabla |\F|^2\\
&=\f{\partial \r}{\partial t}|\F|^2+2\r \F:(\nabla\u \,\F)+\Dv(\r\u)|\F|^2-\Dv(\r\u|\F|^2)\\
&=2\r \F:(\nabla\u \,\F)-\Dv(\r\u|\F|^2).
\end{split}
\end{equation*}
Since
\begin{equation*}
 \begin{split}
\r|\F-I|^2&=\r\sum_{i\neq j}\F_{ij}^2+\r\sum_{i=1}^3(\F_{ii}-1)^2\\
&=\r\sum_{i,j}\F_{ij}^2-2\r\sum_{i=1}^3\F_{ii}+3\r\\
&=\r|\F|^2-2\r\,\textrm{tr}(\F-I)-3\r,
 \end{split}
\end{equation*}
where $\textrm{tr}(\F-I)$ denotes the trace of the matrix $\F-I$, then
$$\r|\F|^2=\r |\F-I|^2+2\r\,\textrm{tr}(\F-I)+3(\r-\bar\r_0)+3\bar\r_0,$$
thus
$$\f{\partial}{\partial t}\left(\r |\F-I|^2+2\r\,\textrm{tr}(\F-I)+3(\r-\bar\r_0)\right)=2\r \F:(\nabla\u \,\F)-\Dv(\r\u|\F|^2).$$
Integrating the above equality, we arrive at
\begin{equation}\label{2222}
\begin{split}
&\f{1}{2}\f{d}{dt}\int_{\R^3}
\left(\r |\F-I|^2+2\r\,\textrm{tr}(\F-I)+3(\r-\bar\r_0)\right)dx\\
&=\int_{\R^3}\r \F:(\nabla\u\,\F)dx=\int_{\R^3}\r\F\F^\top:\nabla\u dx.
\end{split}
\end{equation}
Adding \eqref{222} and \eqref{2222} together, one has
\begin{equation}\label{22222}
\f{d}{dt}\int_{\R^3}
\left(\f{1}{2}\r|\u|^2+\f{1}{2}\r|\F-I|^2
+\r\,\textrm{tr}(\F-I)+\frac32(\r-\bar\r_0)
+\f{P-P_0}{\gamma-1}\right)dx=0.
\end{equation}
We note  from the  first equation of \eqref{1} that
\begin{equation}\label{TT1}
m'(t)=\frac{d}{dt}\int_{\R^3}\left(\r-\bar\r_0\right)dx
=-\int_{\R^3}\Dv (\r\u)dx=0,
\end{equation}
thus, 
\begin{equation}\label{TT2}
m(t)=m(0).
\end{equation}
Due to Lemma \ref{div}, we have, using integration by parts, 
$$\int_{\R^3}\r\F^\top:\nabla\u dx=0.$$ 
From the first and third equations in \eqref{1}, it is easy to deduce that
$$\partial_t(\r\,\textrm{tr}(\F-I))
+\Dv(\r\,\mathrm{tr}(\F-I)\,\u)=\r\F^\top:\nabla\u.$$
 Integrating the above equality yields
\begin{equation}\label{TT3}
\f{d}{dt}\int_{\R^3}\r\,\mathrm{tr}(\F-I)\,dx=0,
\end{equation}
and then 
\begin{equation}\label{TT4}
\int_{\R^3}\r\,\mathrm{tr}(\F-I)\,dx=\int_{\R^3}\r_0\,\mathrm{tr}(\F_0-I)\,dx.
\end{equation}
Substituting \eqref{TT1} and \eqref{TT3} into \eqref{22222}, we have
\begin{equation}\label{22222b}
\f{d}{dt}\int_{\R^3}
\left(\f{1}{2}\r|\u|^2+\f{1}{2}\r|\F-I|^2
+\f{P-P_0}{\gamma-1}\right)dx=0.
\end{equation}
Therefore, \eqref{EE} follows from \eqref{22222b}.
The proof is complete.
\end{proof}

Now we state and prove the finite-time formation of singularity for \eqref{1}.

\begin{Theorem}\label{T1}
Let $(\r,\u,  \F)\in C^1(\R^3\times[0,T))$ be a solution of
\eqref{1} with initial data
\eqref{in}-\eqref{a}. If 
\begin{gather}
m(0)\ge 0,\label{FF1}\\
\mathcal{F}(0)>\f{16\pi}{3}\sigma R^4\|\r_0\|_{L^\infty}, \label{FF}
\end{gather}
and
\begin{equation}\label{a2}
\int_{\R^3}\r_0\,\mathrm{tr}(I-\F_0)dx \ge 2\mathcal{E}(0),
\end{equation}
then $T$ is necessary finite.
\end{Theorem}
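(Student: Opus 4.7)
The plan is to adapt Sideris' classical approach for the compressible Euler equations to account for the extra elastic stress $\Dv(\r\F\F^\top)$ and the conservation laws special to \eqref{1}. I would argue by contradiction: assume the $C^1$ solution exists on $[0,\infty)$ and track the growth of the moment functional $\mathcal{F}(t)$. The goal is to derive a differential inequality of the form $\mathcal{F}'(t)\ge \int\r|\u|^2\,dx$ which, when combined with a Cauchy--Schwarz upper bound exploiting the finite speed of propagation, produces a nonlinear ODE forcing $\mathcal{F}(t)$ to become infinite in finite time.

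To compute $\mathcal{F}'(t)$, I would pair the momentum equation in \eqref{1} with $x$ and integrate by parts, decomposing with respect to the background state $(\bar\r_0,0,I)$. Three contributions arise: the convective flux $\Dv(\r\u\otimes\u)$ produces $\int\r|\u|^2\,dx$; the pressure gradient produces $3\int(P-P_0)\,dx$; and the elastic stress $\Dv(\r\F\F^\top-\bar\r_0 I)$, whose integrand is compactly supported in $D(t)$ by Proposition \ref{p1}, produces $-\int(\r|\F|^2-3\bar\r_0)\,dx$. Expanding via the identity
\[
\r|\F|^2 - 3\bar\r_0 = \r|\F-I|^2 + 2\r\,\tr(\F-I) + 3(\r-\bar\r_0),
\]
and using the conservation laws $m(t)=m(0)$ from \eqref{TT2} and $\int\r\,\tr(\F-I)\,dx=\int\r_0\,\tr(\F_0-I)\,dx$ from \eqref{TT4}, I would reach
\[
\mathcal{F}'(t)=\int\r|\u|^2\,dx + 3\int(P-P_0)\,dx - \int\r|\F-I|^2\,dx + 2\int\r_0\,\tr(I-\F_0)\,dx - 3m(0).
\]
Invoking Lemma \ref{ei} to replace $\int\r|\F-I|^2\,dx$ via the conserved energy, using \eqref{a2} to cancel the elastic contribution against the resulting $\mathcal{E}(0)$ terms, and using \eqref{FF1} together with the convexity of $\r\mapsto\r^\gamma$ (which yields $\int(P-P_0)\,dx\ge \sigma^2 m(0)\ge 0$) to control the $-3m(0)$ correction, I expect the right-hand side to collapse to at least $\int\r|\u|^2\,dx$.

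For the upper bound, Proposition \ref{p1} confines the integrand of $\mathcal{F}(t)$ to $D(t)$, and Cauchy--Schwarz yields
\[
\mathcal{F}(t)\le (\sigma t+R)\Bigl(\int_{D(t)}\r\,dx\Bigr)^{1/2}\Bigl(\int\r|\u|^2\,dx\Bigr)^{1/2}.
\]
Mass conservation gives $\int_{D(t)}\r\,dx=m(0)+\frac{4\pi}{3}\bar\r_0(\sigma t+R)^3$, and the trivial estimate $m(0)\le(\|\r_0\|_{L^\infty}-\bar\r_0)\frac{4\pi}{3}R^3$ together with $R^3\le(\sigma t+R)^3$ reduces this to the clean bound $\int_{D(t)}\r\,dx\le\|\r_0\|_{L^\infty}\frac{4\pi}{3}(\sigma t+R)^3$. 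Hence
\[
\int\r|\u|^2\,dx\ge \frac{3\,\mathcal{F}(t)^2}{4\pi\|\r_0\|_{L^\infty}(\sigma t+R)^5}.
\]

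Combining the two estimates produces the Riccati-type inequality $\mathcal{F}'(t)\ge\frac{3\mathcal{F}(t)^2}{4\pi\|\r_0\|_{L^\infty}(\sigma t+R)^5}$. Since \eqref{FF} gives $\mathcal{F}(0)>0$ and monotonicity keeps $\mathcal{F}(t)>0$ on $[0,T)$, dividing by $\mathcal{F}^2$ and integrating produces
\[
\frac{1}{\mathcal{F}(0)}-\frac{1}{\mathcal{F}(t)}\ge\frac{3}{16\pi\sigma R^4\|\r_0\|_{L^\infty}}\left(1-\frac{R^4}{(\sigma t+R)^4}\right).
\]
Were $T=\infty$, letting $t\to\infty$ would force $\mathcal{F}(0)\le\frac{16\pi}{3}\sigma R^4\|\r_0\|_{L^\infty}$, contradicting \eqref{FF}; hence $T$ must be finite. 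The main technical obstacle is the clean lower bound $\mathcal{F}'(t)\ge\int\r|\u|^2\,dx$: the elastic flux manufactures both the indefinite term $-\int\r|\F-I|^2\,dx$ and the excess-mass correction $-3m(0)$, and compensating both requires the joint use of the trace conservation \eqref{TT4}, the energy identity of Lemma \ref{ei}, and the two hypotheses \eqref{FF1} and \eqref{a2}.
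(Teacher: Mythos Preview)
Your overall strategy coincides with the paper's: compute $\mathcal{F}'(t)$ by pairing the momentum equation with $x$, extract the lower bound $\mathcal{F}'(t)\ge\int\r|\u|^2\,dx$, combine with the Cauchy--Schwarz/finite--propagation upper bound, and integrate the resulting Riccati inequality. Your upper bound and the final ODE step match the paper exactly.

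The gap is in the lower bound for $\mathcal{F}'(t)$. Your direct integration by parts against $\r\F\F^\top-\bar\r_0 I$ yields the elastic contribution $-\int\r|\F-I|^2-2\int\r\,\tr(\F-I)-3m(0)$, and you propose to absorb the $-3m(0)$ correction using $3\int(P-P_0)\ge 3\sigma^2 m(0)$. But that only dominates $3m(0)$ when $\sigma\ge 1$, which is nowhere assumed; the leftover $2\mathcal{E}(0)$ from your use of \eqref{a2} does not help, since \eqref{a2} actually gives $2\mathcal{E}(0)\le\int\r_0\,\tr(I-\F_0)$, an \emph{upper} bound on $2\mathcal{E}(0)$. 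So as stated the inequality $\mathcal{F}'(t)\ge\int\r|\u|^2\,dx$ does not close.

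The paper avoids this by invoking Lemma~\ref{div} ($\Dv(\r\F^\top)=0$) to decompose
\[
\Dv(\r\F\F^\top)=\Dv\bigl(\r(\F-I)(\F-I)^\top\bigr)+\Dv\bigl(\r(\F-I)\bigr),
\]
so that integration by parts gives directly $-\int\r|\F-I|^2-\int\r\,\tr(\F-I)$, with \emph{no} $-3m(0)$ term. Then $\int\r|\F-I|^2\le 2\mathcal{E}(0)$ and \eqref{TT4} plus \eqref{a2} make the elastic contribution nonnegative, while $\int(P-P_0)\ge 0$ (Jensen/H\"older, using \eqref{FF1}) is used only for its sign. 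Equivalently, Lemma~\ref{div} forces the identity $\int\r\,\tr(I-\F)\,dx=3m(t)$; plugging this into your formula collapses $2\int\r_0\,\tr(I-\F_0)-3m(0)$ to $\int\r_0\,\tr(I-\F_0)$ and recovers the paper's expression. Either way, the missing ingredient in your argument is the divergence-free constraint on $\r\F^\top$.
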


\begin{proof}
Let $(\r,\u,\F)$ be a $C^1$ solution of \eqref{1} and \eqref{in}.
Then, $\mathcal{F}\in C^1[0,T)$ and
$$\mathcal{F}'(t)=\int_{\R^3}x\cdot(\r_t\u+\r\u_t)dx.$$
It follows from the Proposition \ref{p1}, the first two equations
 in \eqref{1}, and integration by parts, that
\begin{equation}\label{5}
\begin{split}
\mathcal{F}'(t)=\int_{D(t)}\r|\u|^2dx+3\int_{D(t)}(P-P_0)dx+\int_{D(t)}x\cdot\Dv\left(\r\F\F^\top\right)dx.
\end{split}
\end{equation}

Using the H\"{o}lder inequality, \eqref{TT2} and \eqref{FF1}, we have,
\begin{equation*}
\begin{split}
\int_{D(t)}Pdx&=A\int_{D(t)}\r^\gamma dx\\
&\ge A(\textrm{vol}D(t))^{1-\gamma}\left(\int_{D(t)}\r
dx\right)^\gamma\\
&=A(\textrm{vol}D(t))^{1-\gamma}\left(m(0)+\textrm{vol}D(t)\bar\r_0\right)^\gamma\\
&\ge \textrm{vol}D(t)A\bar\r_0^\gamma =\int_{D(t)}P_0 dx,
\end{split}
\end{equation*}
thus
\begin{equation}\label{7}
\int_{D(t)}(P-P_0)dx\ge 0.
\end{equation}

Note that Lemma \ref{div} implies
$$\Dv(\r\F\F^\top)=\Dv(\r(\F-I)(\F-I)^\top)+\Dv(\r(\F-I)).$$
Then the divergence theorem yields, using Lemma \ref{ei},  \eqref{TT4},  \eqref{a2}  and \eqref{7},
\begin{equation}\label{6}
\begin{split}
&\int_{D(t)}x\cdot\Dv(\r\F\F^\top)dx\\
&=\int_{D(t)}x\cdot\Dv(\r(\F-I)(\F-I)^\top)dx+\int_{D(t)}x\cdot\Dv(\r(\F-I))dx\\
&=-\int_{D(t)}I:\r(\F-I)(\F-I)^\top dx-\int_{D(t)}I:\r(\F-I) dx\\
&=-\int_{D(t)}\r|\F-I|^2 dx-\int_{D(t)}\r\,\mathrm{tr}(\F-I) dx\\
&\ge-2\mathcal{E}(t)-\int_{D(t)}\r\,\mathrm{tr}(\F-I) dx\\
&=-2\mathcal{E}(0)+\int_{\R^3}\r_0\,\mathrm{tr}(I-\F_0)dx\ge 0.
\end{split}
\end{equation}

Therefore, \eqref{5}-\eqref{6} yield
\begin{equation}\label{8}
\mathcal{F}'(t)\ge\int_{D(t)}\r|\u|^2dx.
\end{equation}
On the other hand, \eqref{8} and the Cauchy-Schwarz inequality lead to
\begin{equation}\label{9}
\begin{split}
\mathcal{F}'(t)&\ge \mathcal{F}(t)^2\left(\int_{D(t)}|x|^2\r
dx\right)^{-1}\\
&\ge \mathcal{F}(t)^2(\sigma t+R)^{-2}\left(\int_{D(t)}\r
dx\right)^{-1}\\
&=\mathcal{F}(t)^2(\sigma
t+R)^{-2}\left(\int_{D(t)}\r_0(x)dx\right)^{-1}\\
&\ge\left(\f{4\pi}{3}(\sigma
t+R)^5\|\r_0\|_{L^\infty}\right)^{-1}\mathcal{F}(t)^2,
\end{split}
\end{equation}
where
$$\|\r_0\|_{L^\infty}=\sup_{x\in\R^3}\r_0(x).$$

Since $\mathcal{F}(0)>0$, then \eqref{9} implies that $\mathcal{F}(t)>0$ for
$0\le t<T$, and that
\begin{equation*}
\begin{split}
\mathcal{F}(0)^{-1}&\ge \mathcal{F}(0)^{-1}-\mathcal{F}(T)^{-1}\\
&\ge \f{3}{16\pi
\sigma\|\r_0\|_{L^\infty}}\left(\f{1}{R^4}-\f{1}{(\sigma
T+R)^4}\right),
\end{split}
\end{equation*}
which  shows that $T$ cannot  become arbitrarily
large without contradicting the assumption \eqref{FF}.
Therefore the proof is complete.
\end{proof}

\begin{Remark}
Taking $\r_0(x)=\bar{\rho}_0$,  we see that
\eqref{FF1}-\eqref{a2} can be satisfied if $\u_0$ is supersonic relative to the sound speed $\sigma$, 
the diagonal entries $[I-\F_0]_{ii}\in (0,1)$,  $i=1, 2, 3$, and $\bar\r_0$ is not very large.
\end{Remark}

\bigskip


\section{The Viscous Case}

This section is devoted to the study of formation of singularity and breakdown,  especially of the blowup criteria for the smooth solutions of compressible viscoelastic fluids \eqref{cve} in $\R^3$ with sufficiently smooth initial data:
\begin{equation}\label{in2}
(\r, \u, \F)|_{t=0}=(\r_0(x), \u_0(x), \F_0(x)),\qquad x\in\R^3.
\end{equation}
The  goal is to obtain a blowup criteria in
term of the $L^\infty$ norm of the velocity. In other words, the
$L^\infty$ norm of the velocity controls the blowup for the
 compressible viscoelastic fluids. For this purpose, we
need to assume that
\begin{equation}\label{va}
7\mu>\lambda.
\end{equation}
Obviously, this condition will be fulfilled physically if
viscosities $\mu$ and $\lambda$ satisfy the condition \eqref{v2}
and $\lambda\le 0$ simultaneously. 

Denote $D^k$ and $D^k_0$ as:
$$D^k(\R^3)
:=\{f\in L^1_{loc}(\R^3): \, \|\nabla^k
f\|_{L^2(\R^3)}<\infty\};$$ and
$$D_0^k(\R^3):=\{f\in L^6(\R^3): \, \|\nabla^k f\|_{L^2(\R^3)}<\infty\}.$$

Now, our blowup result for the system \eqref{cve} takes the following form:

\begin{Theorem}\label{vMT}
Assume that the initial data satisfy
\begin{equation}\label{v3}
0\le\r_0\in H^3(\R^3),\quad \u_0\in D_0^1(\R^3)\cap
D^3(\R^3),\quad \F_0\in H^3(\R^3),\quad\Dv(\r_0\F_0^\top)=0
\end{equation}
and
\begin{equation}\label{v4}
-\mu\D\u_0-(\lambda+\mu)\nabla\Dv\u_0+A\nabla\r_0^\gamma=\r_0 g
\end{equation}
for some  $g\in H^1(\R^3)$ with $\sqrt{\r_0}g\in L^2(\R^3)$.
Let $(\r,\u,\F)$ be a classical solutions to the system \eqref{cve}
satisfying
\begin{equation}\label{v5}
\begin{cases}
(\r, \F)\in C([0,T^*], H^3(\R^3)),\\
 \u\in C([0,T^*], D_0^1(\R^3)\cap D^3(\R^3))\cap L^2(0,T^*; D^4(\R^3)),\\
\u_t\in L^\infty(0,T^*; D_0^1(\R^3))\cap L^2(0, T^*; D^2(\R^3)),\\
\sqrt{\r}\u_t\in L^\infty(0,T^*; L^2(\R^3)),
\end{cases}
\end{equation}
and  $T^*$ be the maximal existence time. If $T^*<\infty$ and
\eqref{va} holds, then
\begin{equation}\label{v6}
\lim_{T\rightarrow T^*}\int_0^T\|\nabla\u\|_{L^\infty(\R^3)}dt=\infty.
\end{equation}
\end{Theorem}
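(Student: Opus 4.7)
I would argue by contradiction. Suppose
\[
M := \sup_{0<T<T^{*}}\int_0^T\|\na\u\|_{L^\infty(\R^3)}\,dt < \infty,
\]
and derive a priori estimates on $(\r,\u,\F)$ that are uniform on $[0,T^{*})$ and compatible with the regularity class \eqref{v5}. Combined with the local existence theorem of \cite{HW1}, this allows the solution to be continued past $T^{*}$, contradicting the maximality of $T^{*}$.

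\emph{Step 1 (pointwise bounds and energy).} The continuity equation and the deformation equation in \eqref{cve} are pure transport equations driven by $\na\u$. Integrating along the characteristics of $\u$ yields
\[
\|\r(\cdot,t)\|_{L^\infty}+\|\F(\cdot,t)\|_{L^\infty}\le Ce^{CM},\qquad 0\le t<T^{*}.
\]
The basic energy identity (obtained by testing the momentum equation by $\u$, in the same spirit as Lemma \ref{ei}) then provides uniform control of $\|\sqrt\r\,\u\|_{L^2}$, $\|\sqrt\r\,\F\|_{L^2}$, $\|\r\|_{L^\gamma}$, and of the viscous dissipation $\int_0^{T^{*}}\bigl(\mu\|\na\u\|_{L^2}^2+(\mu+\lambda)\|\Dv\u\|_{L^2}^2\bigr)dt$.

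\emph{Step 2 (first-order estimates).} Test the momentum equation by $\u_t$ and handle the stress $\Dv(\r\F\F^\top)$ by pairing its time derivative with the evolution of $\F$. This produces a differential inequality
\[
\frac{d}{dt}\Phi(t)+\|\sqrt\r\,\u_t\|_{L^2}^2\le C(1+\|\na\u\|_{L^\infty})\Phi(t)+C,
\]
where $\Phi(t)$ collects $\|\na\u\|_{L^2}^2$ together with appropriate pressure and elastic terms. Gronwall with the integrable factor $\|\na\u\|_{L^\infty}$ yields $\Phi\in L^\infty(0,T^{*})$ and $\sqrt\r\,\u_t\in L^2((0,T^{*})\times\R^3)$. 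Viewing the momentum equation as a Lam\'e system
\[
\mu\D\u+(\mu+\lambda)\na\Dv\u=\r\u_t+\r\u\cd\na\u+\na P-\Dv(\r\F\F^\top),
\]
elliptic regularity then delivers the corresponding $L^2_tL^2_x$ bound on $\na^2\u$.

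\emph{Step 3 ($L^\infty_tH^1_x$ bounds on $\na\r$ and $\na\F$; the core step).} Differentiate the continuity and deformation equations and test against $\na\r$, $\na\F$ first in $L^2$ and then in $L^p$ for some $p\in(3,6]$. The forcing contains terms of the type $\na^2\u\cd\na\r$ and $\na^2\u\cd\na\F$, which cannot be closed by $\|\na\u\|_{L^\infty}$ alone. The key device is the Brezis--Wainger type logarithmic embedding
\[
\|\na\u\|_{L^\infty}\le C\bigl(1+\|\Dv\u\|_{L^\infty}+\|\na\times\u\|_{L^\infty}\bigr)\log\!\bigl(e+\|\na^2\u\|_{H^1}\bigr),
\]
together with an $L^p$ elliptic estimate for the Lam\'e operator driven by $\r\u_t$, $\na P$ and $\Dv(\r\F\F^\top)$. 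The restriction \eqref{va}, i.e.\ $7\mu>\lambda$, provides exactly the algebraic margin between the coefficients of $\D\u$ and $\na\Dv\u$ that is needed to absorb the cross-terms produced by the pressure and elastic forcing in these $L^p$ estimates. A Gronwall loop on
\[
\Psi(t):=\|\na\r\|_{H^1}^2+\|\na\F\|_{H^1}^2+\|\sqrt\r\,\u_t\|_{L^2}^2
\]
then closes and delivers a uniform bound on $[0,T^{*})$. This step is the principal obstacle: the Brezis--Wainger self-referential loop on $\Psi$, the fact that $\na(\r\F\F^\top)$ must be estimated in terms of the very quantity one is controlling, and the requirement that all constants remain independent of $T<T^{*}$ all conspire, and \eqref{va} is precisely what makes the resulting coupled system of inequalities closable.

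\emph{Step 4 (bootstrap and conclusion).} With $\na\r$ and $\na\F$ controlled in $L^\infty_tH^1_x$, successive differentiations of \eqref{cve} and the corresponding $H^s$ elliptic estimates for the Lam\'e system propagate the regularity through $H^2$ and $H^3$, producing the bounds on $\u$, $\u_t$ and $\sqrt\r\,\u_t$ listed in \eqref{v5}, with constants depending only on $M$, $T^{*}$ and the initial data. Letting $T\nearrow T^{*}$ yields a limit state admissible as initial data in the local existence theorem, so the solution extends past $T^{*}$, contradicting maximality and establishing \eqref{v6}.
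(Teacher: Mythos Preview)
Your overall architecture (contradiction, uniform a priori bounds on $[0,T^{*})$, extension past $T^{*}$) matches the paper's, and Steps 1, 2, 4 are broadly correct in outline. The problem is Step 3, where you misidentify both the central mechanism and the role of the hypothesis $7\mu>\lambda$.

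First, there is no Brezis--Wainger step in this proof, nor is one needed: you are already assuming $\int_0^{T^{*}}\|\nabla\u\|_{L^\infty}\,dt<\infty$, so $\|\nabla\u\|_{L^\infty}$ is directly available as an $L^1_t$ Gronwall factor. When you differentiate the transport equations for $\r$ and $\F$, the only term not absorbed by $\|\nabla\u\|_{L^\infty}$ is of size $\|\nabla^2\u\|_{L^2}$. The paper handles this by writing the momentum equation as a Lam\'e system with source $G+A\nabla\r^\gamma-\Dv(\r\F\F^\top)$, where $G=\r\u_t+\r\u\cdot\nabla\u$; ordinary $L^2$ elliptic regularity gives $\|\nabla^2\u\|_{L^2}\le C(\|G\|_{L^2}+\|\nabla\r\|_{L^2}+\|\nabla\F\|_{L^2})$, and the Gronwall loop on $\|\nabla\r\|_{L^2}^2+\|\nabla\F\|_{L^2}^2$ then closes, provided one has $G\in L^2_{t,x}$.

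Second---and this is the real gap---the hypothesis $7\mu>\lambda$ is \emph{not} used for any $L^p$ elliptic estimate on the Lam\'e operator. Its role (following Huang--Xin \cite{HX}) is to secure the higher integrability
\[
\sup_{0\le t<T^{*}}\int_{\R^3}\r|\u|^{3+\delta}\,dx\le C
\]
for some small $\delta>0$, obtained by multiplying the momentum equation by $q|\u|^{q-2}\u$; the condition $7\mu>\lambda$ is exactly what makes the resulting quadratic form in $\nabla\u$ coercive for $q$ slightly above $3$. This integrability is what lets you bound the convective contribution $\int\!\!\int\r^2|\u|^2|\nabla\u|^2\,dx\,dt$ in the estimate for $\|G\|_{L^2_{t,x}}$ in your Step 2. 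Without it, you cannot close the bound on $G$, and hence cannot get $\|\nabla^2\u\|_{L^2}\in L^2_t$, so your Step 3 never starts. Your proposal skips this lemma entirely and attributes the constraint $7\mu>\lambda$ to the wrong place.
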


The local existence up to a possible finite time $T^*$ can be
constructed in the functional framework \eqref{v5} with the
compatibility condition \eqref{v4} in spirit of the corresponding
results for the compressible Navier-Stokes equations (see for example \cite{CK}), 
thus we omit the details in this paper and focus on the  breakdown of smooth solutions.
To prove Theorem \ref{vMT}, the main difficulty lies in the estimates
of the gradients of the density and the deformation gradient
provided that the quantity in \eqref{v6} is finite. In fact, the
key estimates in our analysis is $L^\infty_t H^1_x$ bounds of
$\nabla\r$ and $\nabla\F$.

\subsection{Regularity of solutions}
In this subsection, we will derive a number of regularities of
solutions $(\r,\u,\F)$ to the system \eqref{cve} provided
\begin{equation}\label{v7}
\lim_{T\rightarrow
T^*}\int_0^T\|\nabla\u\|_{L^\infty(\R^3)}dt<\infty.
\end{equation}
The standard energy estimate gives
$$\sup_{0\le t\le T}\Big(\|\sqrt{\r}\u\|^2_{L^2}+\|\r\|_{L^\gamma}
+\|\sqrt{\r}\F\|_{L^2}^2\Big)+\int_0^T\|\nabla\u\|_{L^2}^2dt\le
C,\quad 0\le T\le T^*.$$ 
Moreover, from the two transport equations
for the density and the deformation gradient, the assumption
\eqref{v7} yields the $L^\infty$ bounds for the density and the
deformation gradient respectively.

The first step for the regularity of solutions is to improve the
integrability of the velocity. In fact, we have

\begin{Lemma}
Under the assumption \eqref{va}, there exists a small $\dl>0$ such
that
$$\sup_{0\le t\le T}\int_{\R^3}\r|\u|^{3+\dl}dx\le C, \quad
0<T<T^*,$$ where $C$ is a positive constant depending only on
$\|\r\|_{L^\infty}$ and $\|\F\|_{L^\infty}$.
\end{Lemma}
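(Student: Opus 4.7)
The plan is to perform a weighted $L^{3+\delta}$ energy estimate on the momentum equation. I multiply the momentum equation in \eqref{cve} by $|\u|^{1+\delta}\u$ and integrate over $\R^3$. Using the continuity equation to combine $(\r\u)_t$ with $\Dv(\r\u\otimes\u)$, the time-derivative contribution collapses into $\f{1}{3+\delta}\f{d}{dt}\int\r|\u|^{3+\delta}dx$. Integration by parts on the pressure, viscous and elastic stress terms yields
\begin{equation*}
\f{1}{3+\delta}\f{d}{dt}\int\r|\u|^{3+\delta}dx + V(\u) = \int P\,\Dv(|\u|^{1+\delta}\u)dx - \int\r\F\F^\top:\nabla(|\u|^{1+\delta}\u)dx,
\end{equation*}
where $V(\u)=\mu\int\nabla\u:\nabla(|\u|^{1+\delta}\u)dx+(\mu+\lambda)\int\Dv\u\cdot\Dv(|\u|^{1+\delta}\u)dx$.

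A direct computation, using $\partial_i(|\u|^{1+\delta}u_j)=|\u|^{1+\delta}\partial_iu_j+(1+\delta)|\u|^{1+\delta}\hat{u}_j\,\partial_i|\u|$ with $\hat{\u}=\u/|\u|$, and the analogous formula for the divergence, expands $V(\u)$ into
\begin{equation*}
V(\u)=\int|\u|^{1+\delta}\Bigl[\mu|\nabla\u|^2+(1+\delta)\mu|\nabla|\u||^2+(\mu+\lambda)(\Dv\u)^2+(1+\delta)(\mu+\lambda)(\hat{\u}\cdot\nabla)|\u|\,\Dv\u\Bigr]dx.
\end{equation*}
The heart of the proof is to establish coercivity $V(\u)\ge\kappa\int|\u|^{1+\delta}|\nabla\u|^2dx$ for some $\kappa>0$. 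I apply Young's inequality with a free parameter to the cross term $(\hat{\u}\cdot\nabla)|\u|\,\Dv\u$, then invoke the pointwise bounds $((\hat{\u}\cdot\nabla)|\u|)^2\le|\nabla|\u||^2\le|\nabla\u|^2$. When the induced coefficient of $|\nabla|\u||^2$ turns out negative, I do not discard it; instead I absorb the deficit into the leading term $\mu|\nabla\u|^2$ via $|\nabla|\u||^2\le|\nabla\u|^2$. Optimizing the free parameter gives the coercivity condition $(1+\delta)^2(\mu+\lambda)<4(2+\delta)\mu$; as $\delta\to 0^+$ this becomes $\mu+\lambda<8\mu$, i.e.\ the hypothesis $7\mu>\lambda$ in \eqref{va}, so a sufficiently small $\delta>0$ secures the coercivity.

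For the right-hand side, the pointwise bounds $P\le A\|\r\|_{L^\infty}^{\gamma-1}\r$ and $|\r\F\F^\top|\le\|\F\|_{L^\infty}^2\r$, together with $|\nabla(|\u|^{1+\delta}\u)|+|\Dv(|\u|^{1+\delta}\u)|\le C|\u|^{1+\delta}|\nabla\u|$, give
\begin{equation*}
|\mathrm{RHS}|\le C(\|\r\|_{L^\infty},\|\F\|_{L^\infty})\int\r|\u|^{1+\delta}|\nabla\u|\,dx.
\end{equation*}
Splitting $\r|\u|^{1+\delta}|\nabla\u|=(\r|\u|^{1+\delta})^{1/2}\cdot(\r|\u|^{1+\delta})^{1/2}|\nabla\u|$, applying Cauchy--Schwarz, and using $\r\le\|\r\|_{L^\infty}$ to dominate the $|\nabla\u|^2$ factor by $\|\r\|_{L^\infty}\int|\u|^{1+\delta}|\nabla\u|^2dx$, I absorb the latter integral into $V(\u)$. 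The remaining factor $\int\r|\u|^{1+\delta}dx$ is handled by the elementary inequality $\r|\u|^{1+\delta}\le\epsilon\r|\u|^{3+\delta}+C_\epsilon\r$, and $\int\r\,dx$ is controlled by the basic energy estimate recalled just above the statement. A Gronwall argument then closes the uniform bound on $\sup_{0\le t\le T}\int\r|\u|^{3+\delta}dx$.

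The chief obstacle is precisely the coercivity step. A naive application of Young's inequality to the cross term loses a factor and would force the more restrictive condition $\lambda<3\mu$. The correct bookkeeping, which exploits $|\nabla|\u||^2\le|\nabla\u|^2$ and borrows from the dominant $\mu|\nabla\u|^2$ term to cover the potentially negative coefficient of $|\nabla|\u||^2$, is exactly what relaxes the sharp condition to the hypothesis $7\mu>\lambda$ of \eqref{va}, and also pins down why only small $\delta>0$ work.
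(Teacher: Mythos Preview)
Your argument is correct and follows essentially the same route as the paper: multiply the momentum equation by $|\u|^{q-2}\u$ with $q=3+\delta$, use the coercivity of the viscous term under $7\mu>\lambda$ (which the paper simply cites from \cite{HX}, whereas you spell out the Young-inequality optimization leading to $(1+\delta)^2(\mu+\lambda)<4(2+\delta)\mu$), bound the pressure and elastic stress by $C\int\r|\u|^{1+\delta}|\nabla\u|\,dx$, absorb the gradient piece, and close with Gronwall. The only cosmetic difference is that the paper controls the leftover $\int\r|\u|^{q-2}dx$ by H\"older as $C\big(\int\r|\u|^q\big)^{(q-2)/q}$, whereas you use Young's inequality $\r|\u|^{1+\delta}\le\epsilon\r|\u|^{3+\delta}+C_\epsilon\r$; both feed equally well into Gronwall.
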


\begin{proof}
The argument is similar to that of \cite{HX}. 
For $q>3$, we multiply the second equation in
\eqref{cve} by $q|\u|^{q-2}\u$ and integrate over $\R^3$ to obtain, using  the conservation of mass, 
\begin{equation}\label{v8}
\begin{split}
&\f{d}{dt}\int_{\R^3}\r|\u|^qdx+\int_{\R^3}\Big(q|\u|^{q-2}\big(\mu|\nabla\u|^2+(\lambda+\mu)|\Dv\u|^2+\mu(q-2)|\nabla|\u||^2\big)\\
&\qquad\qquad\qquad\qquad\qquad+q(\lambda+\mu)(\nabla|\u|^{q-2})\cdot\u\Dv\u\Big)dx\\
& =Aq\int_{\R^3}\Dv(|\u|^{q-2}\u)\r^\gamma
dx-q\int_{\R^3}\r\F\F^\top:\nabla(|\u|^{q-2}\u) dx\\
& \le C\int_{\R^3}\sqrt{\r}|\u|^{q-2}|\nabla\u|dx\le
\i\int_{\R^3}|\u|^{q-2}|\nabla\u|^2dx+C\int_{\R^3}\r|\u|^{q-2}dx\\
& \le\i\int_{\R^3}|\u|^{q-2}|\nabla\u|^2dx+C\left(\int_{\R^3}\r|\u|^{q}dx\right)^{\f{q-2}{q}},
\end{split}
\end{equation}
for  $\i>0$. 
As in \cite{HX}, if $7\mu>\lambda$, one has
\begin{equation}\label{v9}
\begin{split}
&q|\u|^{q-2}\Big(\mu|\nabla\u|^2+(\lambda+\mu)|\Dv\u|^2+\mu(q-2)|\nabla|\u||^2\Big)\\
&\qquad+q(\lambda+\mu)(\nabla|\u|^{q-2})\cdot\u\,\Dv\u\\
&\ge C|\u|^{q-2}|\nabla\u|^2.
\end{split}
\end{equation}
Substituting \eqref{v9} into \eqref{v8}, and taking $\i$ small
enough, we get by Gronwall's inequality,
$$\sup_{0\le t\le T}\int_{\R^3}\r|\u|^{3+\dl}dx\le C, \quad
0<T<T^*,$$
for some small $\delta>0$.
\end{proof}

The second step for the regularity considers  the
integrability of the material derivative of the velocity, which is
useful for the bounds on $\nabla\r$. To this end, we have,

\begin{Lemma}
Let $$G=\r\u_t+\r\u\cdot\nabla\u.$$  Then, 
$$\int_0^T\!\!\int_{\R^3}G^2dxdt+\sup_{0\le t\le
T}\int_{\R^3}|\nabla\u|^2dx\le
C\int_0^T\!\!\int_{\R^3}(|\nabla\r|^2+|\nabla\F|^2)dxdt+C$$ for all
$0\le T\le T^*$.
\end{Lemma}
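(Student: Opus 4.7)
The plan is to test the momentum equation in \eqref{cve} against $\u_t$, combine the resulting energy identity with elliptic regularity for the Lam\'e operator, and close by Gronwall's lemma. Pairing the second equation of \eqref{cve} with $\u_t$ and carrying out the obvious integrations by parts yields the identity
\begin{equation*}
\begin{split}
\int_{\R^3}\r|\u_t|^2dx&+\f{d}{dt}\int_{\R^3}\left(\f{\mu}{2}|\na\u|^2+\f{\mu+\l}{2}(\Dv\u)^2-P\Dv\u+\r\F\F^\top:\na\u\right)dx\\
&=-\int_{\R^3}\r(\u\cd\na\u)\cd\u_t\,dx-\int_{\R^3}P_t\Dv\u\,dx+\int_{\R^3}(\r\F\F^\top)_t:\na\u\,dx.
\end{split}
\end{equation*}
The boundary terms $-\int P\Dv\u$ and $\int\r\F\F^\top:\na\u$ inside the time derivative are of lower order: since $\|\r\|_{L^\infty}$ and $\|\F\|_{L^\infty}$ are controlled under \eqref{v7} and $\|P\|_{L^2}^2\le A^2\|\r\|_{L^\infty}^{\gamma}\int\r^\gamma dx$ is bounded by the basic energy, Young's inequality absorbs them into $\f{\mu}{4}\|\na\u\|_{L^2}^2+C$. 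Using the continuity and deformation gradient equations I further rewrite $P_t=-\u\cd\na P-\gamma P\Dv\u$ and express $(\r\F\F^\top)_t$ as a multilinear expression in $\u,\na\u,\F$, so that every right-hand side term reduces to quantities already controlled in $L^p$.

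Next I estimate the right-hand side. The pivotal quantity is $\int_{\R^3}\r|\u|^2|\na\u|^2dx$, which appears after Cauchy-Schwarz on $\int\r(\u\cd\na\u)\cd\u_t$ (absorbing $\i\int\r|\u_t|^2$) and also inside $\int P_t\Dv\u$ when treating $\int\u\cd\na P\,\Dv\u$. Exploiting the improved integrability $\sup_t\int\r|\u|^{3+\dl}dx\le C$ from the preceding lemma together with H\"older's inequality and the Gagliardo-Nirenberg interpolation $\|\na\u\|_{L^p}\le C\|\na\u\|_{L^2}^{1-\t}\|\na^2\u\|_{L^2}^{\t}$ for some exponent $p$ strictly less than $6$, I obtain a strictly sub-quadratic power of $\|\na^2\u\|_{L^2}$ and hence, via Young's inequality,
$$\int_{\R^3}\r|\u|^2|\na\u|^2dx\le \i\|\na^2\u\|_{L^2}^2+C(\i)\|\na\u\|_{L^2}^2$$
with $\i>0$ at my disposal.

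To close the $\|\na^2\u\|_{L^2}^2$ contributions I view the momentum equation as the Lam\'e system
$$-\mu\D\u-(\mu+\l)\na\Dv\u=-G-\na P+\Dv(\r\F\F^\top),$$
from which standard elliptic theory gives
$$\|\na^2\u\|_{L^2}^2\le C\left(\|G\|_{L^2}^2+\|\na\r\|_{L^2}^2+\|\na\F\|_{L^2}^2\right).$$
Expanding $G=\r\u_t+\r\u\cd\na\u$ and using $\|\r\|_{L^\infty}\le C$ also yields
$$\int_{\R^3}G^2dx\le C\int_{\R^3}\r|\u_t|^2dx+C\int_{\R^3}\r|\u|^2|\na\u|^2dx.$$
Substituting the elliptic bound into the Gagliardo-Nirenberg estimate for the convective term and taking $\i$ sufficiently small lets me absorb the $\|G\|_{L^2}^2$ contribution on the right, producing
$$\int_{\R^3}G^2dx\le C\int_{\R^3}\r|\u_t|^2dx+C\big(\|\na\r\|_{L^2}^2+\|\na\F\|_{L^2}^2+\|\na\u\|_{L^2}^2\big).$$

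Feeding these bounds back into the energy identity yields a Gronwall-type differential inequality
$$\f{d}{dt}\Phi(t)+\int_{\R^3}\r|\u_t|^2dx\le C\Phi(t)+C\big(\|\na\r\|_{L^2}^2+\|\na\F\|_{L^2}^2\big)+C,$$
where $\Phi(t)$ is equivalent to $\|\na\u\|_{L^2}^2$. Since $T^*<\infty$, Gronwall's lemma closes the bound for $\sup_t\|\na\u\|_{L^2}^2+\int_0^T\!\!\int\r|\u_t|^2dxdt$, and substituting into the $G^2$-estimate completes the proof. The main obstacle is the nonlinear convection $\int\r|\u|^2|\na\u|^2$: without the improved integrability $\sup_t\int\r|\u|^{3+\dl}\le C$ from the preceding lemma, H\"older would only permit the critical Sobolev exponent $p=6$ for $\na\u$, which produces exactly the power $2$ on $\|\na^2\u\|_{L^2}$ and blocks the Young absorption; the sub-critical Gagliardo-Nirenberg window opened by that lemma is precisely what makes the whole scheme close.
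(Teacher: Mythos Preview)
Your proof is correct and follows essentially the same architecture as the paper: test the momentum equation against $\u_t$, pull the pressure and elastic stress terms into the time derivative, control $\|\na^2\u\|_{L^2}$ by elliptic regularity for the Lam\'e operator (with right-hand side $G+\na P-\Dv(\r\F\F^\top)$), and close by Gronwall.

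The one substantive variation is your treatment of the convective term $\int_{\R^3}\r|\u|^2|\na\u|^2\,dx$. The paper interpolates $\|\na\u\|_{L^4}^2\le\|\na\u\|_{L^2}\|\na\u\|_{L^\infty}$, integrates in time, and invokes the standing hypothesis $\int_0^T\|\na\u\|_{L^\infty}\,dt<\infty$ directly to obtain the time-integrated bound $\int_0^T\!\!\int\r^2|\u|^2|\na\u|^2\le C\sup_t\|\na\u\|_{L^2}^{3-\a}$ with $3-\a<2$; the subquadratic power then absorbs into $\sup_t\|\na\u\|_{L^2}^2$. You instead work pointwise in $t$: H\"older against $\r|\u|^{3+\dl}$ places $\na\u$ in $L^{2p}$ for some $2p<6$, Gagliardo--Nirenberg gives a subquadratic power of $\|\na^2\u\|_{L^2}$, and Young plus elliptic regularity lets you absorb the resulting $\i\|G\|_{L^2}^2$ into the left. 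Both routes hinge on exactly the same subcriticality opened by the improved integrability lemma; the paper's is marginally shorter because it uses the assumption \eqref{v7} at this step, while yours is more self-contained but loops through the elliptic estimate one extra time. A minor remark: the term $\int\u\cd\na P\,\Dv\u$ does not actually reduce to $\int\r|\u|^2|\na\u|^2$ as you suggest; after integration by parts it becomes $-\int P|\Dv\u|^2-\int P\,\u\cd\na\Dv\u$, and the second piece is estimated via $\|\sqrt{\r}\u\|_{L^2}\|\na^2\u\|_{L^2}$ (pulling a factor $\sqrt{\r}$ out of $P=A\r^\gamma$), which fits your scheme without modification.
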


\begin{proof}
From the $L^\infty$ bound of $\r$, we have
\begin{equation}\label{v10}
\int_0^T\!\!\int_{\R^3}G^2dxdt\le
C\int_0^T\!\!\int_{\R^3}\r\u_t^2dxdt+2\int_0^T\!\!\int_{\R^3}|\r\u\cdot\nabla\u|^2dxdt.
\end{equation}
We proceed to estimate the right-hand side of \eqref{v10} term by
term as follows. For the last term of \eqref{v10}, we have
\begin{equation}\label{v11}
\begin{split}
&\int_0^T\!\!\int_{\R^3}\r^2|\u|^2|\nabla\u|^2dxdt \\
&\le C\int_0^T\|\sqrt{\r}\u\|_{L^4}^2\|\nabla\u\|_{L^4}^2dt\\
&\le
C\int_0^T\|\sqrt{\r}\u\|^\alpha_{L^{3+\alpha}}\|\sqrt{\r}\u\|_{L^6}^{2-\alpha}\|\nabla\u\|_{L^4}^2dt\\
&\le
C\int_0^T\|\nabla\u\|_{L^2}^{3-\alpha}\|\nabla\u\|_{L^\infty}dt\\
&\le C\sup_{0\le T\le
T^*}\|\nabla\u\|_{L^2}^{3-\alpha}\int_0^T\|\nabla\u\|_{L^\infty}dt\\
&\le C\sup_{0\le T\le T^*}\|\nabla\u\|_{L^2}^{3-\alpha},
\end{split}
\end{equation}
where 
$$\f{\alpha}{3+\dl}+\f{2-\alpha}{6}=\f12, \quad
1<\alpha=\f{3+\dl}{3-\dl}<2.$$

For the first term, we multiply the second equation of \eqref{cve}
by $u_t$ and integrate over $\R^3$ to obtain
\begin{equation}\label{v12}
\begin{split}
&\int_{\R^3}\r\u_t^2dx+\int_{\R^3}\r\u\cdot\nabla\u\cdot\u_t
dx+\f{d}{dt}\int_{\R^3}\Big(\f{\mu}{2}|\nabla\u|^2+\f{\lambda+\mu}{2}|\Dv\u|^2\Big)dx\\
&\quad=A\int_{\R^3}\r^\gamma\Dv\u_t
dx-\int_{\R^3}\r\F\F^\top:\nabla\u_tdx.
\end{split}
\end{equation}
From the first equation and the third equation in \eqref{cve}, we
deduce that
\begin{equation}\label{v13}
\begin{split}
&\int_{\R^3}\r^\gamma\Dv\u_t dx\\
&=\f{d}{dt}\int_{\R^3}\r^\gamma\Dv\u
dx-\gamma\int_{\R^3}\r_t\r^{\gamma-1}\Dv\u dx\\
&=\f{d}{dt}\int_{\R^3}\r^\gamma\Dv\u
dx+\gamma\int_{\R^3}\Dv(\r\u)\r^{\gamma-1}\Dv\u dx\\
&=\f{d}{dt}\int_{\R^3}\r^\gamma\Dv\u
dx-\int_{\R^3}\r^{\gamma}\u\cdot\nabla\Dv\u
dx+(\gamma-1)\int_{\R^3}\r^\gamma|\Dv\u|^2dx\\
&\le\f{d}{dt}\int_{\R^3}\r^\gamma\Dv\u
dx+C\|\sqrt{\r}\u\|_{L^2}\|\nabla^2\u\|_{L^2(\R^3)}+C\|\nabla\u\|_{L^2(\R^3)}^2\\
&\le\f{d}{dt}\int_{\R^3}\r^\gamma\Dv\u
dx+C+\i\|G\|_{L^2(\R^3}^2+C\|\nabla\r\|_{L^2(\R^3)}^2\\
&\quad+C\|\nabla\F\|_{L^2(\R^3)}^2+C\|\nabla\u\|_{L^2(\R^3)}^2
\end{split}
\end{equation}
and
\begin{equation}\label{v14}
\begin{split}
&\int_{\R^3}\r\F\F^\top:\nabla\u_tdx\\
&=\f{d}{dt}\int_{\R^3}\r\F\F^\top:\nabla\u
dx-\int_{\R^3}\partial_t(\r\F\F^\top):\nabla\u dx\\
&\le\f{d}{dt}\int_{\R^3}\r\F\F^\top:\nabla\u
dx-\int_{\R^3}\r\F_{ik}\F_{jk}\u\cdot\nabla\partial_j\u^i dx\\
&\quad +C\int_{\R^3}\r|\F|^2|\nabla\u|^2dx\\
&\le \f{d}{dt}\int_{\R^3}\r\F\F^\top:\nabla\u
dx+C\|\sqrt{\r}\u\|_{L^2}\|\nabla^2\u\|_{L^2(\R^3)}+C\|\nabla\u\|_{L^2(\R^3)}^2\\
&\le \f{d}{dt}\int_{\R^3}\r\F\F^\top:\nabla\u
dx+C+\i\|G\|_{L^2(\R^3}^2+C\|\nabla\r\|_{L^2(\R^3)}^2\\
&\quad+C\|\nabla\F\|_{L^2(\R^3)}^2+C\|\nabla\u\|_{L^2(\R^3)}^2,
\end{split}
\end{equation}
where we used
$$\partial_t(\r\F\F^\top)+\u\cdot\nabla(\r\F\F^\top)=\nabla\u(\r\F\F^\top)+\r\F\F^\top(\nabla\u)^\top-\r\F\F^\top\Dv\u.$$
Note that
\begin{equation*}
\begin{split}
&\left|\int_0^T\!\!\int_{\R^3}\r\u\cdot\nabla\u\cdot\u_tdxdt\right|\\
&\le \f12\int_0^T\!\!\int_{\R^3}\r\u_t^2dxdt+\int_0^T\!\!\int_{\R^3}\r|\u\cdot\nabla\u|^2dxdt\\
&\le\f12\int_0^T\!\!\int_{\R^3}\r\u_t^2dxdt+C\sup_{0\le
T<T^*}\|\nabla\u\|_{L^2}^{3-\alpha}.
\end{split}
\end{equation*}
Substituting \eqref{v13} and \eqref{v14} to \eqref{v12},
using Gronwall's inequality and the Cauchy-Schwarz inequality, we have,
$$\sup_{0\le t\le T^*}\|\nabla\u\|_{L^2(\R^3)}\le C;$$
and
\begin{equation}\label{v15}
\begin{split}
&\int_0^T\!\!\int_{\R^3}\r\u_t^2dxdt\\
&\le C\int_0^T\!\!\int_{\R^3}(|\nabla\r|^2+|\nabla\F|^2)dxdt+\i\int_0^T\!\!\int_{\R^3}G^2dxdt\\
&\quad +C\sup_{0\le T<T^*}\|\nabla\u\|_{L^2}^{3-\alpha}+C.
\end{split}
\end{equation}
Here we used the following estimates: for all $0\le t\le T$
$$\left|\int_{\R^3}\r\F\F^\top:\nabla\u dx\right|\le C\|\sqrt{\r}\F\|_{L^2}\|\nabla\u\|_{L^2}\le C+\f{\mu}{8}\|\nabla\u\|_{L^2}^2$$
and
$$\left|\int_{\R^3}\r^\gamma\Dv\u dx\right|\le C\|\r^\gamma\|_{L^1}\|\nabla\u\|_{L^2}\le C+\f{\mu}{8}\|\nabla\u\|_{L^2}^2.$$
Choosing a sufficiently small $\i$, from \eqref{v10} and
\eqref{v15}, one has
$$\int_0^T\!\!\int_{\R^3}G^2dxdt+\sup_{0\le t\le
T}\int_{\R^3}|\nabla\u|^2dx\le
C\int_0^T\!\!\int_{\R^3}(|\nabla\r|^2+|\nabla\F|^2)dxdt+C.$$ 
The proof is complete.
\end{proof}

With the aid of the estimate of the material derivative of the
velocity, we can obtain the $L^\infty(0,T; L^2)$ estimates of
$\nabla\r$ and $\nabla\F$ and we actually have

\begin{Lemma}
Under the assumption \eqref{va}, the following estimates hold for
$0\le T\le T^*$:
\begin{equation}\label{v16}
\sup_{0\le T\le T^*}\int_{\R^3}|\nabla\r|^2dx\le C,
\end{equation}
\begin{equation}\label{v17}
\sup_{0\le T\le T^*}\int_{\R^3}|\nabla\F|^2dx\le C,
\end{equation}
\begin{equation}\label{v18}
\int_0^T\!\!\int_{\R^3}\r\u_t^2dxdt+\sup_{0\le T\le
T^*}\int_{\R^3}|\nabla\u|^2dx+\int_0^T\|\u\|_{H^2(\R^3)}^2dt\le C,
\end{equation}
\end{Lemma}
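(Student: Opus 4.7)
The strategy is to derive a closed Gronwall inequality for $\phi(t):=\|\nabla\r(t)\|_{L^2}^2+\|\nabla\F(t)\|_{L^2}^2$ by combining transport-type estimates with the elliptic structure of the momentum equation and the $L^2_{t,x}$ bound on $G$ from the previous lemma.

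First, I would differentiate the continuity equation $\r_t+\u\cdot\nabla\r+\r\,\Dv\u=0$ in $x_i$, multiply by $\partial_i\r$, sum over $i$, and integrate by parts in the transport term to obtain
$$\frac12\frac{d}{dt}\|\nabla\r\|_{L^2}^2\le C\|\nabla\u\|_{L^\infty}\|\nabla\r\|_{L^2}^2+C\|\r\|_{L^\infty}\|\nabla\r\|_{L^2}\|\nabla^2\u\|_{L^2}.$$
The analogous manipulation on $\F_t+\u\cdot\nabla\F=\nabla\u\,\F$, using the transported $L^\infty$ bound on $\F$ already available under \eqref{v7}, gives the same kind of inequality with $\r$ replaced by $\F$.

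Next, I would treat the momentum equation as an elliptic system for $\u$,
$$-\mu\Delta\u-(\mu+\lambda)\nabla\Dv\u=-G-\nabla P+\Dv(\r\F\F^\top),$$
which is strongly elliptic by \eqref{v2}. Standard $L^2$ elliptic regularity on $\R^3$, combined with the $L^\infty$ bounds on $\r$ and $\F$, yields
$$\|\nabla^2\u\|_{L^2}\le C\bigl(\|G\|_{L^2}+\|\nabla\r\|_{L^2}+\|\nabla\F\|_{L^2}\bigr).$$
Summing the two transport estimates, inserting this elliptic bound, and applying Cauchy-Schwarz leads to
$$\phi'(t)\le C\bigl(1+\|\nabla\u\|_{L^\infty}\bigr)\phi(t)+C\|G(t)\|_{L^2}^2.$$

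To close, I would integrate in $t$ and insert the bound from the preceding lemma, namely $\int_0^T\|G\|_{L^2}^2\,dt\le C\int_0^T\phi\,dt+C$, obtaining
$$\phi(t)\le \phi(0)+C+C\int_0^t\bigl(1+\|\nabla\u\|_{L^\infty}\bigr)\phi(s)\,ds;$$
Gronwall's lemma together with hypothesis \eqref{v7} then gives $\phi(t)\le C$ on $[0,T^*]$, which is \eqref{v16}--\eqref{v17}. Feeding these uniform bounds back into the preceding lemma produces $\sup_t\|\nabla\u\|_{L^2}^2\le C$ and $\int_0^T\|G\|_{L^2}^2\,dt\le C$, while \eqref{v15} furnishes $\int_0^T\!\!\int_{\R^3}\r\u_t^2\,dx\,dt\le C$; the elliptic estimate then upgrades this to $\int_0^T\|\nabla^2\u\|_{L^2}^2\,dt\le C$, yielding \eqref{v18} after the lower-order part of the $H^2$-norm is absorbed through the existing $D_0^1$ a priori control. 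The main obstacle is the self-referential dependence of $\nabla\r$ and $\nabla\F$ on $\u_t$ through $G=\r\u_t+\r\u\cdot\nabla\u$; the key point that resolves it is that $\|G\|_{L^2}^2$ enters only after integration in time, so its self-generation through $\phi$ can be absorbed by Gronwall without ever requiring pointwise control of $\|\nabla\u\|_{L^\infty}$.
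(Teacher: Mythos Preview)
Your proposal is correct and follows essentially the same approach as the paper: transport estimates on $\nabla\r$ and $\nabla\F$, the elliptic bound $\|\nabla^2\u\|_{L^2}\le C(\|G\|_{L^2}+\|\nabla\r\|_{L^2}+\|\nabla\F\|_{L^2})$ from the momentum equation, and closure via the $L^2_{t,x}$ control of $G$ from the preceding lemma together with Gronwall under \eqref{v7}. The only cosmetic difference is that the paper treats $\nabla\r$ and \eqref{v18} by referring to the analogous Navier--Stokes argument in \cite{HX} and writes out only the $\nabla\F$ inequality, whereas you couple $\nabla\r$ and $\nabla\F$ in a single quantity $\phi$; the underlying mechanism is identical.
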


\begin{proof}
The arguments of \eqref{v16}, \eqref{v18} are similar to
Proposition 2.4 in \cite{HX} provided \eqref{v17} holds, and thus
we omit them. For \eqref{v17}, we can proceed as follows.

Differentiating the third equation in \eqref{cve} with respect to
$x_i$ (denoting $\partial_{x_i}$ by $\partial_i$) and multiplying the resulting identity by $2\partial_{i}\F$
yield
\begin{equation*}
\partial_t|\partial_i\F|^2+\u\cdot\nabla|\partial_i\F|^2=2\nabla\partial_i\u\F:\partial_i\F+2\nabla\u\partial_i\F:\partial_i\F.
\end{equation*}
Integrating the above identity over $\R^3$, one has
\begin{equation}\label{v20}
\begin{split}
\f{d}{dt}\int_{\R^3}|\partial_i\F|^2dx&=\int_{\R^3}|\partial_i\F|^2\Dv\u
dx+2\int_{\R^3}\left(\nabla\partial_i\u\F:\partial_i\F+\nabla\u\partial_i\F:\partial_i\F\right)dx\\
&\le
C\|\nabla\u\|_{L^\infty}\|\partial_i\F\|_{L^2}^2+C\|\partial_i\F\|_{L^2}\|\nabla^2\u\|_{L^2}\\
&\le
C\|\nabla\u\|_{L^\infty}\|\partial_i\F\|_{L^2}^2+C\|\partial_i\F\|_{L^2}^2+C\|\nabla^2\u\|^2_{L^2}\\
&\le
C\|\nabla\u\|_{L^\infty}\|\partial_i\F\|_{L^2}^2+C\|\partial_i\F\|_{L^2}^2+C\|\partial_i\r\|_{L^2}^2+C\|G\|_{L^2}^2
\end{split}
\end{equation}
since
$$-\mu\D\u-(\lambda+\mu)\nabla\Dv\u=-G-A\nabla\r^\gamma-\Dv(\r\F\F^\top).$$
From  \eqref{v16}, \eqref{v20} and Gronwall's inequality,
we obtain \eqref{v17}. 
The proof is complete.
\end{proof}

The next step is to obtain the uniform in time estimate for $\u_t$ as the following:

\begin{Lemma}
Under the assumption \eqref{va}, the following estimates hold for
all $0\le T\le T^*$,
\begin{equation}\label{v21}
\sup_{0\le t\le
T}\|\sqrt{\r}\u_t\|_{L^2}^2+\int_0^T\!\!\int_{\R^3}|\nabla\u_t|^2dxdt\le C,
\end{equation}
\begin{equation}\label{v221}
\sup_{0\le T\le T^*}\|\u\|_{H^2}\le C.
\end{equation}
\end{Lemma}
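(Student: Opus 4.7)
The plan is to derive \eqref{v21} by differentiating the momentum equation of \eqref{cve} with respect to $t$, testing against $\u_t$, and integrating over $\R^3$. Writing $\r_t=-\Dv(\r\u)$, the $\u_{tt}$ contribution and the $\r_t|\u_t|^2$ remainder combine into $\tfrac{1}{2}\tfrac{d}{dt}\int\r|\u_t|^2\,dx$ plus a transport-type commutator, while integration by parts on the diffusive terms produces the coercive contribution $\mu\|\nabla\u_t\|_{L^2}^2+(\mu+\lambda)\|\Dv\u_t\|_{L^2}^2$. The resulting identity has the schematic form
\[
\tfrac{1}{2}\tfrac{d}{dt}\!\int_{\R^3}\!\r|\u_t|^2\,dx+\mu\|\nabla\u_t\|_{L^2}^2+(\mu+\lambda)\|\Dv\u_t\|_{L^2}^2=I_1+I_2+I_3,
\]
where $I_1$ collects the convective remainders from $(\r\u\cdot\nabla\u)_t\cdot\u_t$ and $\r_t|\u_t|^2$, $I_2=\int_{\R^3}P_t\,\Dv\u_t\,dx$ is the pressure term after one integration by parts, and $I_3=-\int_{\R^3}(\r\F\F^\top)_t:\nabla\u_t\,dx$ is the elastic term.

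Each $I_k$ must be bounded so that powers of $\|\nabla\u_t\|_{L^2}$ can be absorbed on the left and remaining factors are either uniformly bounded or $L^1_t$-integrable. For $I_2$, the continuity equation gives $P_t=-\u\cdot\nabla P-\gamma P\,\Dv\u$, and Cauchy-Schwarz together with \eqref{v16} and the $L^\infty$ bound on $\r$ yields $I_2\le\varepsilon\|\nabla\u_t\|_{L^2}^2+C(1+\|\u\|_{H^2}^2)$. For $I_3$ I would invoke the matrix identity
\[
\partial_t(\r\F\F^\top)+\u\cdot\nabla(\r\F\F^\top)=\nabla\u(\r\F\F^\top)+\r\F\F^\top(\nabla\u)^\top-\r\F\F^\top\Dv\u
\]
already used in the paper and estimate the four resulting pieces via the $L^\infty$ bounds on $\r$ and $\F$ (from the two transport equations and \eqref{v7}) combined with the $L^2$ bounds on $\nabla\r,\nabla\F,\nabla\u$ from \eqref{v16}--\eqref{v18}; the worst piece $\int\u\cdot\nabla(\r\F\F^\top):\nabla\u_t$ is handled by pulling out $\|\u\|_{L^\infty}\lesssim\|\u\|_{H^2}$, which is $L^2_t$-integrable by \eqref{v18}. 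The convective term $I_1$ is dominated by $\|\sqrt{\r}\u_t\|_{L^2}\|\u\|_{L^\infty}\|\nabla\u_t\|_{L^2}$ together with $\|\sqrt{\r}\u_t\|_{L^2}^2\|\nabla\u\|_{L^\infty}$, which fit the same scheme.

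Choosing $\varepsilon$ small to absorb $\varepsilon\|\nabla\u_t\|_{L^2}^2$ into the left-hand side, I expect the differential inequality
\[
\tfrac{d}{dt}\|\sqrt{\r}\u_t\|_{L^2}^2+\|\nabla\u_t\|_{L^2}^2\le C\bigl(1+\|\u\|_{H^2}^2+\|\nabla\u\|_{L^\infty}\bigr)\bigl(1+\|\sqrt{\r}\u_t\|_{L^2}^2\bigr),
\]
whose bracket is $L^1$ on $[0,T^*]$ by \eqref{v7} and \eqref{v18}. Gronwall's inequality then yields \eqref{v21}, the initial value $\|\sqrt{\r_0}\u_t(0)\|_{L^2}$ being finite through the compatibility condition \eqref{v4}. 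For \eqref{v221} I would regard the momentum equation as a Lam\'e system
\[
-\mu\Delta\u-(\lambda+\mu)\nabla\Dv\u=-\r\u_t-\r\u\cdot\nabla\u-A\nabla\r^\gamma+\Dv(\r\F\F^\top),
\]
whose right-hand side is uniformly in $L^2$ once \eqref{v16}, \eqref{v17}, and \eqref{v21} are available (with the convective term $\r\u\cdot\nabla\u$ controlled by a Gagliardo-Nirenberg interpolation that lets one absorb $\|\nabla^2\u\|_{L^2}$ into the left); the standard $H^2$ elliptic estimate then closes the argument. The principal obstacle is $I_3$: unlike the Navier-Stokes case, the time derivative of the elastic stress $\r\F\F^\top$ offers no obvious cancellation, and its control relies crucially on the $L^\infty_tH^1_x$ bound on $\F$ just secured in \eqref{v17}.
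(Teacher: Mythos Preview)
Your proposal is correct and follows essentially the same route as the paper: differentiate the momentum equation in $t$, test with $\u_t$, treat the pressure via the continuity equation and the elastic stress via the transport identity for $\r\F\F^\top$, then close by Gronwall using \eqref{v7}, \eqref{v18} and the compatibility condition, with \eqref{v221} following from the Lam\'e elliptic estimate. One small imprecision: the piece $\int\r_t(\u\cdot\nabla\u)\cdot\u_t\,dx$ in $I_1$, after writing $\r_t=-\Dv(\r\u)$ and integrating by parts, also generates terms containing $|\nabla^2\u|$ (cf.\ \eqref{v27}), so $I_1$ is not dominated solely by the two expressions you list---but these extra terms are already absorbed by the $\|\u\|_{H^2}^2$ factor in your final differential inequality.
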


\begin{proof}
The argument of \eqref{v221} is similar to Proposition 2.5 in
\cite{HX}, and thus we omit it.
To prove \eqref{v21}, we differentiating the second equation in
\eqref{cve} with respect to $t$ to obtain
\begin{equation}\label{v23}
\begin{split}
&\r\u_{tt}+\r\u\cdot\nabla\u_t-\mu\D\u_t-(\lambda+\mu)\nabla\Dv\u_t+A\nabla
(\r^\gamma)_t\\
&=-\r_t(\u_t+\u\cdot\nabla\u)-\r\u_t\cdot\nabla\u+\Dv(\r\F\F^\top)_t.
\end{split}
\end{equation}

Taking the inner product of the above equation with $\u_t$ in
$L^2(\R^3)$ and integrating by parts, one obtains,
\begin{equation}\label{v24}
\begin{split}
&\f12\f{d}{dt}\int_{\R^3}\r\u_t^2dx+\int_{\R^3}(\mu|\nabla\u_t|^2+(\lambda+\mu)|\Dv\u_t|^2)dx\\
&\quad-A\int_{\R^3}(\r^\gamma)_t\Dv\u_tdx+\int_{\R^3}(\r\F\F^\top)_t:\nabla\u_t\\
&=-\int_{\R^3}\left(\r\u\cdot\nabla\left[\left(\f12\u_t+\u\cdot\nabla\u\right)\u_t\right]+\r\u_t\cdot\nabla\u\cdot\u_t\right)dx.
\end{split}
\end{equation}

From the first and the third equations in \eqref{cve}, we have,
\begin{equation}\label{v25}
\begin{split}
&\int_{\R^3}(\r^\gamma)_t\Dv\u_t dx\\
&=-\int_{\R^3}\Dv(\r^\gamma\u)\Dv\u_tdx-(\gamma-1)\int_{\R^3}\r^\gamma\Dv\u\Dv\u_tdx\\
&=\int_{\R^3}(\r^\gamma\Dv\u+\nabla\r^\gamma\cdot\u)\Dv\u_tdx-(\gamma-1)\int_{\R^3}\r^\gamma\Dv\u\Dv\u_tdx\\
&\le C\|\nabla\u_t\|_{L^2}\|\nabla\u\|_{L^2}+C\|\u\|_{L^\infty}\|\nabla\r\|_{L^2}\|\nabla\u_t\|_{L^2}\\
&\le\i\|\nabla\u_t\|_{L^2}^2+C\|\u\|_{H^2}^2
\end{split}
\end{equation}
since
$$(\r^\gamma)_t+\Dv(\r^\gamma\u)+(\gamma-1)\r^\gamma\Dv\u=0;$$
and
\begin{equation}\label{v26}
\begin{split}
&\int_{\R^3}(\r\F\F^\top)_t:\nabla\u_tdx\\
&=-\int_{\R^3}\u\cdot\nabla(\r\F\F^\top):\nabla\u_tdx+\int_{\R^3}\nabla\u\r\F\F^\top:\nabla\u_tdx\\
&\quad+\int_{\R^3}\r\F\F^\top(\nabla\u)^\top:\nabla\u_tdx+\int_{\R^3}\r\F\F^\top\Dv\u:\nabla\u_tdx\\
&\le
C\|\u\|_{L^\infty}(\|\nabla\r\|_{L^2}+\|\nabla\F\|_{L^2})\|\nabla\u_t\|_{L^2}+C\|\nabla\u\|_{L^2}\|\nabla\u_t\|_{L^2}\\
&\le \i\|\nabla\u_t\|_{L^2}^2+C\|\u\|_{H^2}^2.
\end{split}
\end{equation}

For the right-hand side of \eqref{v24}, we have
\begin{equation}\label{v27}
\begin{split}
&\left|-\int_{\R^3}\left(\r\u\cdot\nabla\left[\left(\f12\u_t+\u\cdot\nabla\u\right)\u_t\right]+\r\u_t\cdot\nabla\u\cdot\u_t\right)dx\right|\\
&\quad\le
\int_{\R^3}\Big(\r|\u||\u_t||\nabla\u_t|+\r|\u||\u_t||\nabla\u|^2+\r|\u|^2|\u_t||\nabla^2\u|\\
&\qquad\qquad\qquad +\r|\u|^2|\nabla\u||\nabla\u_t|+\r|\u_t|^2|\nabla\u|\Big)dx\\
&\quad\le
C\Big(\|\u\|_{L^6}\|\sqrt{\r}\u_t\|_{L^3}\|\nabla\u_t\|_{L^2}+\|\u\|_{L^6}\|\u_t\|_{L^6}\|\nabla\u\|_{L^3}^2\\
&\qquad +\|\u^2\|_{L^3}\|\u_t\|_{L^6}\|\nabla^2\u\|_{L^2}
+\|\nabla\u_t\|_{L^2}\|\nabla\u\|_{L^6}\|\u^2\|_{L^3}\\
&\qquad  +\|\sqrt{\r}\u_t\|^2_{L^2}\|\nabla\u\|_{L^\infty}\Big)\\
&\quad\le
C\Big(\|\sqrt{\r}\u_t\|_{L^2}^{\f12}\|\nabla\u_t\|_{L^2}^{\f32}+\|\nabla\u_t\|_{L^2}\|\nabla\u\|_{L^6}+\|\u_t\|_{L^6}\|\nabla^2\u\|_{L^2}\\
&\qquad+\|\nabla\u\|_{L^6}\|\nabla\u_t\|_{L^2}+\|\sqrt{\r}\u_t\|^2_{L^2}\|\nabla\u\|_{L^\infty}\Big)\\
&\quad\le
\i\|\nabla\u_t\|^2_{L^2}+C(1+\|\nabla\u\|_{L^\infty})\|\sqrt{\r}\u_t\|_{L^2}^2+C\|\u\|_{H^2}^2.
\end{split}
\end{equation}

Substituting \eqref{v25}, \eqref{v26} and \eqref{v27} into
\eqref{v24}, we get
\begin{equation}\label{v28}
\begin{split}
&\f{d}{dt}\int_{\R^3}\f12\r\u_t^2dx+\mu\int_{\R^3}|\nabla\u_t|^2dx\\
& \le \i\int_{\R^3}|\nabla\u_t|^2dx+C((1+\|\nabla\u\|_{L^\infty})\|\sqrt{\r}\u_t\|_{L^2}^2+\|\u\|_{H^2}^2).
\end{split}
\end{equation}
Thanks to the compatibility condition, it holds
$$\sqrt{\r_0}\u_t(0,x)\in L^2(\R^3),$$
and thus, for sufficiently small $\i$, \eqref{v28} gives us
\eqref{v21}.
The proof is complete.

\end{proof}

Finally, we now can obtain bounds of the first order derivatives
of the density and the second derivatives of the velocity.

\begin{Lemma}
Under the assumption \eqref{va}, the following estimates hold for
$0\le T\le T^*$,
\begin{equation}\label{v29}
\sup_{0\le t\le T}(\|\r_t(t)\|_{L^6}+\|\r\|_{W^{1,6}})\le C,
\end{equation}
\begin{equation}\label{v30}
\sup_{0\le t\le T}(\|\F_t(t)\|_{L^6}+\|\F\|_{W^{1,6}})\le C,
\end{equation}
\begin{equation}\label{v31}
\int_0^T\|\u(t)\|_{W^{2,6}}^2dt\le C.
\end{equation}
\end{Lemma}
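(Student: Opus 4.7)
The plan is to close a simultaneous Gronwall-type inequality on $y(t):=\|\nabla\r(t)\|_{L^6}+\|\nabla\F(t)\|_{L^6}$, whose only dangerous forcing term is $\|\nabla^2\u\|_{L^6}$. That term is then recovered through an $L^6$ Lam\'{e} elliptic estimate applied to the momentum equation, which reintroduces precisely $\|\nabla\r\|_{L^6}$ and $\|\nabla\F\|_{L^6}$ (together with quantities already controlled by the previous lemmas). Once $y(t)$ is bounded, the bounds on $\r_t,\F_t$ in $L^6$ and on $\u$ in $L^2_tW^{2,6}_x$ follow immediately.

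Concretely, first I would differentiate the continuity equation in $x_i$, test against $|\nabla\r|^4\partial_i\r$, and integrate by parts using $\int_{\R^3}\u\cdot\nabla|\nabla\r|^6\,dx=-\int_{\R^3}\Dv\u\,|\nabla\r|^6\,dx$; the resulting identity divides by $\|\nabla\r\|_{L^6}^5$ to yield
\begin{equation*}
\frac{d}{dt}\|\nabla\r\|_{L^6} \le C\|\nabla\u\|_{L^\infty}\|\nabla\r\|_{L^6} + C\|\nabla^2\u\|_{L^6}.
\end{equation*}
The same manipulation applied to the $\F$-equation, using the identity $\partial_i\F_t+\u\cdot\nabla(\partial_i\F)=\partial_i\nabla\u\,\F+\nabla\u\,\partial_i\F-\partial_i\u\cdot\nabla\F$ and the uniform $L^\infty$ bound of $\F$, produces the analogous estimate for $\|\nabla\F\|_{L^6}$.

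Second, I would rewrite the momentum equation as the Lam\'{e} system
\begin{equation*}
-\mu\D\u-(\lambda+\mu)\nabla\Dv\u = -G - A\nabla\r^\gamma + \Dv(\r\F\F^\top),
\end{equation*}
and invoke the standard $L^6$ Calder\'{o}n--Zygmund estimate to conclude
\begin{equation*}
\|\nabla^2\u\|_{L^6} \le C\bigl(\|G\|_{L^6} + \|\nabla\r\|_{L^6} + \|\nabla\F\|_{L^6}\bigr),
\end{equation*}
with the constant absorbing the $L^\infty$ norms of $\r$ and $\F$. For $\|G\|_{L^6}$, the Sobolev embedding $D_0^1(\R^3)\hookrightarrow L^6(\R^3)$ applied to $\u_t$ gives $\|G\|_{L^6}\le C\|\nabla\u_t\|_{L^2}+C\|\u\|_{H^2}^2$, both pieces already uniformly bounded by \eqref{v21} and \eqref{v221}.

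Adding the two transport estimates and inserting the elliptic bound for $\|\nabla^2\u\|_{L^6}$ gives
\begin{equation*}
y'(t) \le C(1+\|\nabla\u\|_{L^\infty})\,y(t) + C\|\nabla\u_t\|_{L^2} + C\|\u\|_{H^2}^2.
\end{equation*}
By hypothesis \eqref{v7} the weight $\int_0^{T^*}\|\nabla\u\|_{L^\infty}\,dt$ is finite, while \eqref{v18}, \eqref{v21}, and \eqref{v221} ensure that the last two terms are integrable in time. Gronwall's inequality then yields $\sup_{0\le t\le T^*}y(t)\le C$, proving \eqref{v29}--\eqref{v30} for the gradient parts (the $L^6$ norms of $\r$ and $\F$ themselves follow from propagation along characteristics using $\int_0^{T^*}\|\Dv\u\|_{L^\infty}\,dt<\infty$). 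The $L^6$ bounds on $\r_t$ and $\F_t$ are then immediate from the transport equations and H\"{o}lder's inequality, and squaring the elliptic estimate and integrating in time delivers \eqref{v31}. The main obstacle is the circular structure of the a priori bound: $\|\nabla^2\u\|_{L^6}$ cannot be estimated in isolation but only modulo $\|\nabla\r\|_{L^6}+\|\nabla\F\|_{L^6}$, so the two transport inequalities and the elliptic inequality must be closed \emph{simultaneously} by a single Gronwall application rather than in sequence.
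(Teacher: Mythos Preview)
Your proposal is correct and follows essentially the same route as the paper: differentiate the transport equations, test against $|\nabla\cdot|^{4}\partial_i(\cdot)$ to produce $L^6$ inequalities, feed the $\|\nabla^2\u\|_{L^6}$ term back through the Lam\'e elliptic estimate (which returns $\|G\|_{L^6}+\|\nabla\r\|_{L^6}+\|\nabla\F\|_{L^6}$), use $\u_t\in L^2_tL^6_x$ from \eqref{v21} to control $\|G\|_{L^6}$, and close by Gronwall. The only presentational difference is that the paper treats the $\r$-estimate as inherited from \cite{HX} and writes out only the $\F$-estimate, whereas you make explicit (and correctly emphasize) that the two must be handled simultaneously because the elliptic bound couples them; one small wording slip is calling $\|\nabla\u_t\|_{L^2}$ ``uniformly bounded'' when in fact only its $L^2_t$ integrability from \eqref{v21} is available---but you immediately use it that way, so no harm is done.
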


\begin{proof}
The arguments of \eqref{v29} and \eqref{v31} are similar to those
in \cite{HX} provided \eqref{v30} is verified, and hence are omitted here. 
To establish \eqref{v30}, we first deduce from the previous
estimates that,
$$\u_t\in L^2(0,T; L^6(\R^3)),\quad G\in L^2(0,T; L^6(\R^3)).$$
Differentiating the third equation in \eqref{cve} with respect to
$x_i$, and multiplying the resulting identity by
$6|\partial_i\F|^4\partial_i\F$, one has,
\begin{equation}\label{v32}
\begin{split}
&\f{d}{dt}\int_{\R^3}|\partial_i\F|^6dx\\&=-6\int_{\R^3}|\partial_i\F|^6\Dv\u
dx-6\int_{\R^3}|\partial_i\F|^4\partial_i\F\partial_i\nabla\u\F
dx-6\int_{\R^3}|\partial_i\F|^4\nabla\u\partial_i\F\partial_i\F
dx\\
&\le
C\Big(\|\nabla\u\|_{L^\infty}\|\nabla\F\|^6_{L^6}+\|\nabla\F\|_{L^6}^5(\|\nabla\r\|_{L^6}+\|\nabla\F\|_{L^6}+\|G\|_{L^6})\Big).
\end{split}
\end{equation}
The above inequality and Gronwall's inequality imply
$$\sup_{0\le t\le T}\|\nabla\F\|_{L^6}\le C.$$
This estimate, together with the third equation in \eqref{cve} give
the bound of $\F_t$ in \eqref{v30}.
The proof is complete.
\end{proof}

\subsection{Proof of Theorem \ref{vMT}} 
To begin with, we have the
following higher order estimates for the density and the
deformation gradient:

\begin{Lemma}\label{y1}
 For all $0\le T\le T^*$, the following estimates hold:
$$\|\r\|_{L^\infty(H^2)}+\|\r_t\|_{L^\infty(H^1)}+\|\r_{tt}\|_{L^2}\le
C,$$
$$\|\r^\gamma\|_{L^\infty(H^2)}+\|(\r^\gamma)_t\|_{L^\infty(H^1)}+\|(\r^\gamma)_{tt}\|_{L^2}\le
C,$$ and
$$\|\F\|_{L^\infty(H^2)}+\|\F_t\|_{L^\infty(H^1)}+\|\F_{tt}\|_{L^2}\le
C.$$
\end{Lemma}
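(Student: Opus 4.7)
The plan is to propagate the previously obtained $L^\infty_t W^{1,6}$ regularity of $\rho,\F$ and the $L^2_t W^{2,6}$ regularity of $\u$ one derivative higher in space, and then read off the time-derivative estimates algebraically from the transport equations.

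\textbf{Step 1 (spatial $H^2$-bound for $\F$ and $\r$).} I would apply $\partial_{ij}^2$ to the third equation of \eqref{cve}, multiply by $\partial_{ij}^2\F$, integrate over $\R^3$, and use $\int \u\cdot\nabla|\partial^2\F|^2\,dx=-\int|\partial^2\F|^2\Dv\u\,dx$ to obtain a bound of the form
\begin{equation*}
\f{d}{dt}\|\nabla^2\F\|_{L^2}^2
\;\le\; C\bigl(1+\|\nabla\u\|_{L^\infty}\bigr)\|\nabla^2\F\|_{L^2}^2
\;+\;C\|\nabla^2\u\|_{L^3}^2\|\nabla\F\|_{L^6}^2
\;+\;C\|\nabla^3\u\|_{L^2}^2\|\F\|_{L^\infty}^2.
\end{equation*}
By Sobolev interpolation $\|\nabla^2\u\|_{L^3}^2\le C\|\u\|_{H^2}\|\u\|_{W^{2,6}}$, and both $\|\nabla\F\|_{L^6}$ and $\|\u\|_{H^2}$ are uniformly bounded by \eqref{v17}, \eqref{v30} and \eqref{v221}, while $\|\u\|_{W^{2,6}}^2$ and $\|\nabla\u\|_{L^\infty}$ are both in $L^1_t$ by \eqref{v31} and \eqref{v7}. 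Gronwall then yields $\F\in L^\infty_tH^2$. The identical manipulation applied to the continuity equation, using $\nabla^2\r_t = -[\nabla^2,\u\cdot\nabla]\r-\nabla^2(\r\Dv\u)$, delivers $\r\in L^\infty_t H^2$.

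\textbf{Step 2 ($H^1$-bound for $\r_t,\F_t$).} Read these off directly from
\begin{equation*}
\r_t=-\u\cdot\nabla\r-\r\,\Dv\u,\qquad \F_t=-\u\cdot\nabla\F+\nabla\u\,\F,
\end{equation*}
by differentiating once in $x$ and estimating each product in $L^2$ via the Step~1 bound together with $\u\in L^\infty_t(H^2)\hookrightarrow L^\infty_t(L^\infty)$ and $\nabla\r,\nabla\F\in L^\infty_t(L^6)$. This gives $\r_t,\F_t\in L^\infty_tH^1$.

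\textbf{Step 3 ($L^2_{t,x}$-bound for $\r_{tt},\F_{tt}$).} Time-differentiate to get $\r_{tt}=-\Dv(\r_t\u)-\Dv(\r\u_t)$ and the analogous expression for $\F_{tt}$. Each piece is in $L^2((0,T^*)\times\R^3)$: the pieces with a $t$-derivative landing on $\u$ use $\nabla\u_t\in L^2_{t,x}$ and $\sqrt{\r}\u_t\in L^\infty_t L^2$ from \eqref{v21}, while the pieces involving $\r_t$ or $\F_t$ use Step~2 and $\u\in L^\infty_tH^2$.

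\textbf{Step 4 ($\r^\gamma$).} Apply the chain rule: $\nabla\r^\gamma=\gamma\r^{\gamma-1}\nabla\r$, $\nabla^2\r^\gamma=\gamma(\gamma-1)\r^{\gamma-2}\nabla\r\otimes\nabla\r+\gamma\r^{\gamma-1}\nabla^2\r$, $(\r^\gamma)_t=\gamma\r^{\gamma-1}\r_t$, and similarly for $(\r^\gamma)_{tt}$. The $L^\infty$ bound on $\r$ together with Steps~1--3 gives the claimed norms of $\r^\gamma$ immediately.

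The main obstacle is Step~1: the mixed term $\nabla^2\u\cdot\nabla\rho$ (and its $\F$ analogue) sits at the critical integrability threshold, since $\nabla^2\u$ is not uniformly in $L^3$ in time. It is here that one must simultaneously use the interpolation $\|\nabla^2\u\|_{L^3}^2\le C\|\u\|_{H^2}\|\u\|_{W^{2,6}}$, the time-integrability of $\|\u\|_{W^{2,6}}^2$ from \eqref{v31}, and the blowup-criterion hypothesis $\int_0^T\|\nabla\u\|_{L^\infty}\,dt<\infty$ to close the Gronwall loop; without \eqref{v7} the coefficient in the Gronwall inequality fails to be integrable and the $H^2$ bound cannot be propagated.
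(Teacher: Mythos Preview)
Your Step~1 contains a real gap: the term $C\|\nabla^3\u\|_{L^2}^2\|\F\|_{L^\infty}^2$ in your differential inequality is never accounted for. None of the prior lemmas gives $\nabla^3\u\in L^2_tL^2_x$; estimate \eqref{v31} only provides $\u\in L^2_tW^{2,6}_x$, which does not control third spatial derivatives in $L^2$. The paper closes this by invoking elliptic regularity for the Lam\'e operator in the momentum equation: from $-\mu\Delta\u-(\lambda+\mu)\nabla\Dv\u = G+A\nabla\r^\gamma-\Dv(\r\F\F^\top)$ one obtains
\[
\|\nabla^3\u\|_{L^2}\le C\bigl(\|G\|_{H^1}+\|\nabla^2\r^\gamma\|_{L^2}+\|\nabla^2\F\|_{L^2}+1\bigr),
\]
so that the product $\|\nabla^3\u\|_{L^2}\|\nabla^2\F\|_{L^2}$ contributes additional $C\|\nabla^2\F\|_{L^2}^2+C\|\nabla^2\r^\gamma\|_{L^2}^2$ pieces to the Gronwall coefficient, while the genuine forcing becomes $\|G\|_{H^1}^2+\|\nabla^2\u\|_{L^6}^2$, and $\|G\|_{H^1}^2\in L^1_t$ thanks to $\nabla\u_t\in L^2_{t,x}$ from \eqref{v21}. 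In particular the $H^2$ estimates for $\r$, $\r^\gamma$ and $\F$ must be run \emph{simultaneously} and closed jointly via Gronwall; they cannot be decoupled as your outline suggests.

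A smaller issue is Step~4: the chain-rule formula $\nabla^2\r^\gamma=\gamma(\gamma-1)\r^{\gamma-2}\nabla\r\otimes\nabla\r+\gamma\r^{\gamma-1}\nabla^2\r$ is dangerous when $1<\gamma<2$ and $\r$ is allowed to vanish (which \eqref{v3} permits), since $\r^{\gamma-2}$ is then singular. The standard remedy, used in the paper and in \cite{HX}, is to run the $H^2$ energy estimate directly on the transport equation $(\r^\gamma)_t+\u\cdot\nabla\r^\gamma+\gamma\r^\gamma\,\Dv\u=0$ rather than deducing the bounds on $\r^\gamma$ from those on $\r$.
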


\begin{proof}
The argument for $\r$ and $\r^\gamma$ is similar to those in
\cite{HX}, and thus we will focus on the estimates on $\F$.
For this purpose, we first observe that from the second equation
in \eqref{cve}, we have
\begin{equation*}
\begin{split}
\|\u\|_{H^3}&\le
C\Big(\|G\|_{H^1}+\|\nabla\r^\gamma\|_{H^1}+\|\Dv(\r\F\F^\top)\|_{H^1}\Big)\\
&\le
C\Big(\|G\|_{H^1}+\|\nabla^2\r^\gamma\|_{L^2}+C+\|\nabla\F\|_{L^4}^2+\|\nabla\r\|_{L^4}^2+\|\nabla^2\F\|_{L^2}\Big)\\
&\le
C\Big(\|G\|_{H^1}+\|\nabla^2\r^\gamma\|_{L^2}+C+\|\nabla^2\F\|_{L^2}\Big),
\end{split}
\end{equation*}
since
$$\|\nabla\r\|_{L^4}^2\le \|\nabla\r\|_{L^2}^{\f12}\|\nabla\r\|_{L^6}^{\f32}\le C$$
and
$$\|\nabla\F\|_{L^4}^2\le \|\nabla\F\|_{L^2}^{\f12}\|\nabla\F\|_{L^6}^{\f32}\le C.$$

Applying $\nabla^2$ to the third equation of \eqref{cve} to yield
\begin{equation*}
\begin{split}
&(\nabla_{ij}\F)_t+\u\cdot\nabla(\nabla_{ij}\F)+\nabla_i\u\cdot\nabla\nabla_j\F+\nabla_{ij}\u\cdot\nabla\F\\
&\quad=(\nabla_{ij}\nabla\u)\F+\nabla\u\nabla_{ij}\F+\nabla_i\nabla\u\nabla_j\F+\nabla_j\nabla\u\nabla_i\F.
\end{split}
\end{equation*}
Multiplying the above identity by $2\nabla_{ij}\F$ and integrating
over $\R^3$, one obtains
\begin{equation*}
\begin{split}
&\f{d}{dt}\int_{\R^3}|\nabla^2\F|^2dx\\
&\le C\int_{\R^3}\Big(|\nabla^3\u||\F||\nabla^2\F|+|\nabla\u||\nabla^2\F|^2+|\nabla^2\u||\nabla\F||\nabla^2\F|\Big)dx\\
&\quad+\int_{\R^3}\u\cdot\nabla|\nabla^2\F|^2dx\\
&\le
C\int_{\R^3}\Big(|\nabla^3\u||\F||\nabla^2\F|+|\nabla\u||\nabla^2\F|^2+|\nabla^2\u||\nabla\F||\nabla^2\F|\Big)dx\\
&\quad-\int_{\R^3}\Dv\u|\nabla^2\F|^2dx\\
&\le C\int_{\R^3}\Big(|\nabla^3\u||\F||\nabla^2\F|+|\nabla\u||\nabla^2\F|^2+|\nabla^2\u||\nabla\F||\nabla^2\F|\Big)dx\\
&\le
C\Big(\|\nabla\u\|_{L^\infty}\|\nabla^2\F\|_{L^2}^2+\|\nabla^2\F\|_{L^2}^2+\|G\|_{H^1}^2+\|\nabla^2\r^\gamma\|_{L^2}^2+C+\|\nabla^2\u\|_{L^6}^2\Big)\\
&\le
C(\|\nabla\u\|_{L^\infty}+1)\|\nabla^2\F\|_{L^2}^2+C(\|G\|_{H^1}^2+\|\nabla^2\u\|_{L^6}^2+1),
\end{split}
\end{equation*}
since
$$\|\nabla\F\|_{L^3}\le \|\nabla\F\|_{L^2}^{\f12}\|\nabla\F\|_{L^6}^{\f12}\le C.$$
Because $\u\in L^2_tW^{2,6}_x$, by Gronwall's inequality, the above
inequality yields the bound
$$\|\F\|_{L^\infty_t H^2_x}\le C.$$
The estimates of $\F_t$ and $\F_{tt}$ follow from the similar energy
methods and the third equation of \eqref{cve}.
The proof is complete.
\end{proof}

When we state the $H^3$ regularity of the solution $(\r,\u,\F)$,
the following estimate is useful.

\begin{Lemma}\label{y2}
 For $0\le T\le T^*$, we have
$$\int_0^T\!\!\int_{\R^3}\r\u_{tt}^2dxdt+\sup_{0\le t\le
T}\int_{\R^3}|\nabla\u_t|^2dx\le C.$$
\end{Lemma}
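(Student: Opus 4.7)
The plan is to upgrade the energy identity for $\u_t$ that was derived in \eqref{v23}--\eqref{v24}: instead of testing the time--differentiated momentum equation against $\u_t$, I would test it against $\u_{tt}$. Concretely, take the inner product of \eqref{v23} with $\u_{tt}$ in $L^2(\R^3)$ and integrate by parts in space. The principal terms on the left will be
\[
\int_{\R^3}\r|\u_{tt}|^2\,dx+\f{1}{2}\f{d}{dt}\int_{\R^3}\bigl(\mu|\nabla\u_t|^2+(\lambda+\mu)|\Dv\u_t|^2\bigr)\,dx,
\]
and integration in $t\in[0,T]$ will produce exactly the two quantities appearing in the statement.

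The main work is in bounding the remaining contributions. The convective term $\int \r\u\cdot\nabla\u_t\cdot\u_{tt}$ is handled by Cauchy--Schwarz, putting $\sqrt{\r}\u_{tt}$ on one side with $\i$ weight (absorbed on the left), and using the uniform bounds on $\u$ in $L^\infty_tL^6_x$ together with $\nabla\u_t\in L^\infty_tL^2_x$ that comes as part of what we are proving (closed via Gronwall). The three inhomogeneous terms $-\r_t(\u_t+\u\cdot\nabla\u)-\r\u_t\cdot\nabla\u+\Dv(\r\F\F^\top)_t$ and the pressure term $A\nabla(\r^\gamma)_t$ are estimated by expanding all time derivatives using the first and third equations of \eqref{cve} (so that $\r_t=-\Dv(\r\u)$, $(\r^\gamma)_t=-\Dv(\r^\gamma\u)-(\gamma-1)\r^\gamma\Dv\u$, and $\F_t=-\u\cdot\nabla\F+\nabla\u\,\F$) and invoking the already-established uniform bounds of Lemmas \ref{y1} and \eqref{v29}--\eqref{v31}: namely $\r,\F\in L^\infty_t H^2_x$, $\r_t,\F_t\in L^\infty_t H^1_x$, $\u\in L^\infty_tH^2_x$, and $\u\in L^2_tW^{2,6}_x$. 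Each resulting product is split via H\"older, Sobolev and the Gagliardo--Nirenberg inequality so that one factor is absorbed into $\i\|\nabla\u_t\|_{L^2}^2$ and the remaining ones form an $L^1_t$ function of $T$.

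After combining the estimates, one arrives at a differential inequality of Gronwall type:
\[
\f{d}{dt}\int_{\R^3}|\nabla\u_t|^2\,dx+\int_{\R^3}\r|\u_{tt}|^2\,dx\le C\bigl(1+\|\nabla\u\|_{L^\infty}\bigr)\int_{\R^3}|\nabla\u_t|^2\,dx+h(t),
\]
with $h\in L^1(0,T)$ built from the controlled quantities. Together with the assumption \eqref{v7} that $\int_0^T\|\nabla\u\|_{L^\infty}\,dt$ stays finite and the control of $\|\nabla\u_t(0)\|_{L^2}$ via the compatibility condition \eqref{v4} applied to \eqref{v23} at $t=0$, Gronwall's lemma yields the asserted bound.

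The main obstacle I expect is the term $\Dv(\r\F\F^\top)_t$ tested against $\u_{tt}$: after integration by parts it becomes $\int (\r\F\F^\top)_t:\nabla\u_{tt}\,dx$, and since $\u_{tt}$ is not directly controlled, one has to move the time derivative back onto the test function, writing this as $\f{d}{dt}\int(\r\F\F^\top)_t:\nabla\u_t\,dx-\int(\r\F\F^\top)_{tt}:\nabla\u_t\,dx$. The second derivative $(\r\F\F^\top)_{tt}$ must then be controlled using the bounds on $\r_{tt}$ and $\F_{tt}$ from Lemma \ref{y1}. A parallel trick is needed for the pressure term with $(\r^\gamma)_t$. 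This bookkeeping is the delicate part; once it is done, the other pieces reduce to the same Gronwall mechanism already used for \eqref{v21}.
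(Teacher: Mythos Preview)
Your proposal follows essentially the same route as the paper: differentiate the momentum equation in time, test against $\u_{tt}$, and for the terms that would otherwise contain $\nabla\u_{tt}$ (pressure and elastic stress), shift a time derivative so that $(\r^\gamma)_{tt}$ and $(\r\F\F^\top)_{tt}$ appear and can be controlled via Lemma~\ref{y1}. The closure by Gronwall and the use of the compatibility condition are also as in the paper.

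One point to correct: you indicate that the inhomogeneous terms $-\r_t(\u_t+\u\cdot\nabla\u)$ tested against $\u_{tt}$ can be handled directly by H\"older/Sobolev, absorbing into $\i\|\nabla\u_t\|_{L^2}^2$. In fact the same time-derivative transfer is required here too. Since vacuum is allowed, you only control $\sqrt{\r}\,\u_{tt}$ in $L^2$, not $\u_{tt}$ itself, and $\r_t=-\r\,\Dv\u-\u\cdot\nabla\r$ does not carry a clean factor of $\sqrt{\r}$ in its second summand. The paper therefore writes
\[
\int_{\R^3}\r_t\u_t\cdot\u_{tt}\,dx=\f{1}{2}\f{d}{dt}\int_{\R^3}\r_t|\u_t|^2\,dx-\f12\int_{\R^3}\r_{tt}|\u_t|^2\,dx,
\]
and similarly for $\int\r_t(\u\cdot\nabla\u)\cdot\u_{tt}$, then uses $\r_{tt}=-\Dv(\r_t\u+\r\u_t)$ and an integration by parts in $x$ to recover a factor of $\r$ on the dangerous piece; this in turn requires the elliptic estimate $\|\u_t\|_{H^2}\le C(\|\sqrt{\r}\u_{tt}\|_{L^2}+\|\nabla\u_t\|_{L^2}+1)$. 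With this adjustment your outline matches the paper's proof.
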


\begin{proof}
Differentiating the second equation in \eqref{cve} with respect to
$t$, multiplying the resulting equation by $\u_{tt}$, and then
integrating over $\R^3$, one obtains
\begin{equation}\label{v22}
\begin{split}
&\int_{\R^3}\r\u_{tt}^2dx+\int_{\R^3}\r\u\cdot\nabla\u_t\cdot\u_{tt}dx+\f{d}{dt}\int_{\R^3}\left(\f{\mu}{2}|\nabla\u_t|^2+\f{\lambda+\mu}{2}|\Dv\u_t|^2\right)dx\\
& =A\int_{\R^3}(\r^\gamma)_t\Dv\u_{tt}dx-\int_{\R^3}(\r\F\F^\top)_t:\nabla\u_{tt}dx-\int_{\R^3}\r_t(\u_t+\u\cdot\nabla\u)\u_{tt}dx\\
&\qquad-\int_{\R^3}\r\u_t\cdot\nabla\u\cdot\u_{tt}dx\\
& =A\f{d}{dt}\int_{\R^3}(\r^\gamma)_t\Dv\u_{t}dx-A\int_{\R^3}(\r^\gamma)_{tt}\Dv\u_tdx-\f{d}{dt}\int_{\R^3}(\r\F\F^\top)_{t}:\nabla\u_{t}dx\\
&\qquad +\int_{\R^3}(\r\F\F^\top)_{tt}:\nabla\u_t dx
-\int_{\R^3}\r\u_t\cdot\nabla\u\cdot\u_{tt}dx-\f{d}{dt}\int_{\R^3}\f12\r_t|\u_t|^2dx\\
&\qquad+\f12\int_{\R^3}\r_{tt}|\u_t|^2dx-\f{d}{dt}\int_{\R^3}\r_t(\u\cdot\nabla\u)\u_tdx+\int_{\R^3}\r_{tt}\u\cdot\nabla\u\cdot\u_tdx\\
&\qquad
+\int_{\R^3}\r_t\u_t\cdot\nabla\u\cdot\u_tdx+\int_{\R^3}\r_t\u\cdot\nabla\u_t\cdot\u_tdx.
\end{split}
\end{equation}

Observe that
$$\left|\int_{\R^3}\r\u\cdot\nabla\u_t\cdot\u_{tt}dx\right|\le\i\|\sqrt{\r}\u_{tt}\|_{L^2}^2+C\|\nabla\u_t\|_{L^2}^2;$$
\begin{equation*}
\begin{split}
\left|\int_{\R^3}\r\u_t\cdot\nabla\u\cdot\u_{tt}dx\right|&\le
\i\|\sqrt{\r}\u_{tt}\|_{L^2}^2+C\|\sqrt{\r}\u_t\|_{L^3}^2\|\nabla\u\|_{L^6}^2\\
&\le
\i\|\sqrt{\r}\u_{tt}\|_{L^2}^2+C\|\sqrt{\r}\u_t\|_{L^2}\|\u_t\|_{L^6}\\
&\le \i\|\sqrt{\r}\u_{tt}\|_{L^2}^2+C\|\nabla\u_t\|_{L^2};
\end{split}
\end{equation*}
\begin{equation*}
\begin{split}
\left|\int_{\R^3}\r_{tt}|\u_t|^2dx\right|&=\left|\int_{\R^3}\Dv(\r_t\u+\r\u_t)|\u_t|^2dx\right|\\
&=\left|\int_{\R^3}(\r_t\u+\r\u_t)\cdot\nabla|\u_t|^2dx\right|\\
&\le
C\|\r_t\|_{L^6}\|\u\|_{L^6}\|\u_t\|_{L^6}\|\nabla\u_t\|_{L^2}+\int_{\R^3}\r|\u_t|^2|\nabla\u_t|dx\\
&\le
C\|\nabla\u_t\|_{L^2}^2+C\|\sqrt{\r}\u_t\|_{L^2}\|\sqrt{\r}\u_t\|_{L^6}\|\nabla\u_t\|_{L^3}\\
&\le
C\|\nabla\u_t\|_{L^2}^2+C\|\nabla\u_t\|_{L^2}\|\u_t\|_{H^2}\\
&\le
C\|\nabla\u_t\|_{L^2}^2+C\|\nabla\u_t\|_{L^2}(\|\sqrt{\r}\u_{tt}\|_{L^2}+\|\nabla\u_t\|_{L^2}+1)\\
&\le C\|\nabla\u_t\|_{L^2}^2+\i\|\sqrt{\r}\u_{tt}\|_{L^2}^2+C
\end{split}
\end{equation*}
since
$$\mu\D\u_t+(\lambda+\mu)\nabla\Dv\u_t=G_t+A\nabla(\r^\gamma)_t-\Dv(\r\F\F^\top)_t$$
implies that
\begin{equation*}
\begin{split}
\|\u_t\|_{H^2}&\le
C(\|G_t\|_{L^2}+\|\nabla(\r^\gamma)_t\|_{L^2}+\|\Dv(\r\F\F^\top)_t\|_{L^2})\\
&\le C(\|\sqrt{\r}\u_{tt}\|_{L^2}+\|\nabla\u_t\|_{L^2}+1);
\end{split}
\end{equation*}
 and
\begin{equation*}
\begin{split}
\left|\int_{\R^3}\r_{tt}\u\cdot\nabla\u\cdot\u_{t}dx\right|&\le
\|\r_{tt}\|_{L^2}\|\u\cdot\nabla\u\|_{L^3}\|\u_t\|_{L^6}\\
&\le C\|\r_{tt}\|_{L^2}^2+C\|\nabla\u_t\|_{L^2}^2;
\end{split}
\end{equation*}
\begin{equation*}
\begin{split}
\left|\int_{\R^3}\r_t\u_t\cdot\nabla\u\cdot\u_tdx\right|&\le
\|\r_t\|_{L^2}\|\u_t^2\|_{L^3}\|\nabla\u\|_{L^6}\le
C\|\nabla\u_t\|_{L^2}^2;
\end{split}
\end{equation*}
\begin{equation*}
\begin{split}
\left|\int_{\R^3}\r_t\u\cdot\nabla\u_t\cdot\u_tdx\right|&\le
\|\r_t\|_{L^3}\|\u\|_{L^\infty}\|\nabla\u_t\|_{L^2}\|\u_t\|_{L^6}\\
&\le C\|\nabla\u_t\|_{L^2}\|\u_t\|_{L^6}\le
C\|\nabla\u_t\|_{L^2}^2.
\end{split}
\end{equation*}

Substituting the above estimates  back to \eqref{v22} and then
integrating over $[0,t]$ with $0\le t\le T$, one has
\begin{equation}\label{v23b}
\begin{split}
&\int_0^t\int_{\R^3}\r\u_{tt}^2dxds+\f{\mu}{4}\sup_{0\le t \le T}\int_{\R^3}|\nabla\u_t|^2dx\\
&\le C\i\int_0^T\!\!\int_{\R^3}\r\u_{tt}^2dxds +C\int_0^T\!\!\int_{\R^3}\|\nabla\u_t|^2dxds+C.
\end{split}
\end{equation}
Here we used the following estimates: for all $0\le t\le T$,
\begin{equation*}
\begin{split}
\left|\int_{\R^3}(\r^\gamma)_t\Dv\u_{t}dx\right|
&\le\|(\r^\gamma)_t\|_{L^2}\|\nabla\u_t\|_{L^2}\\
&\le C+\f{\mu}{8}\sup_{0\le t\le T}\int_{\R^3}|\nabla\u_t|^2dx;
\end{split}
\end{equation*}
\begin{equation*}
\begin{split}
\left|\int_{\R^3}(\r\F\F^\top)_{t}:\nabla\u_{t}dx\right|
&\le (\|\r_t\|_{L^2}+\|\F_t\|_{L^2})\|\nabla\u_t\|_{L^2}\\
&\le C+\f{\mu}{8}\sup_{0\le t\le T}\int_{\R^3}|\nabla\u_t|^2dx;
\end{split}
\end{equation*}
\begin{equation*}
\begin{split}
\left|\int_{\R^3}\r_t|\u_t|^2dx\right|&=\left|\int_{\R^3}\Dv(\r\u)\u_t^2dx\right|=\left|\int_{\R^3}\r\u\nabla\u_t^2dx\right|\\
&\le \|\sqrt{\r}\u_t\|_{L^2}\|\nabla\u_t\|_{L^2}\le
C+\f{\mu}{8}\sup_{0\le t\le T}\int_{\R^3}|\nabla\u_t|^2dx;
\end{split}
\end{equation*}
and
\begin{equation*}
\begin{split}
\left|\int_{\R^3}\r_t(\u\cdot\nabla\u)\u_tdx\right|
&\le \|\r_t\|_{L^6}\|\u\|_{L^6}\|\nabla\u\|_{L^6}\|\nabla\u_t\|_{L^2}\\
&\le C+\f{\mu}{8}\sup_{0\le t\le T}\int_{\R^3}|\nabla\u_t|^2dx.
\end{split}
\end{equation*}

Choosing a sufficient small $\i$, then \eqref{v24}, combining with
Gronwall's inequality, gives
$$\int_0^T\!\!\int_{\R^3}\r\u_{tt}^2dxdt+\sup_{0\le t\le
T}\int_{\R^3}|\nabla\u_t|^2dx\le C.$$
The proof is complete.
\end{proof}

With aid of the above estimate, we can obtain the $H^3$ regularity as
follows.

\begin{Lemma}
$$\|\r\|_{L^\infty_tH^3_x}+\|\F\|_{L^\infty_tH^3_x}+\|\u\|_{L^\infty_tH^3_x}\le C.$$
\end{Lemma}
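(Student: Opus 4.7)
The plan is to obtain the $L^\infty_tH^3_x$ bound of $\u$ by viewing the momentum equation as an elliptic Lam\'e system, and then to close an $H^3$ energy inequality for the transport variables $\r$ and $\F$ via Gronwall, using the standing assumption \eqref{v7} together with the $L^2_t$--integrability of $\u_t$ in $H^2$ that is built into Lemma \ref{y2}.

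First, I would rewrite the momentum equation in the form
$$-\mu\D\u-(\lambda+\mu)\nabla\Dv\u=-G-A\nabla\r^\gamma+\Dv(\r\F\F^\top)$$
and apply the standard Lam\'e regularity
$$\|\u\|_{H^k}\le C\bigl(\|G\|_{H^{k-2}}+\|\nabla\r^\gamma\|_{H^{k-2}}+\|\Dv(\r\F\F^\top)\|_{H^{k-2}}\bigr).$$
For $k=3$, every term on the right is uniformly bounded by Lemma \ref{y1}, the preceding $L^\infty_tL^6_x$ bounds on $\nabla\r,\nabla\F$, and the bound $\nabla\u_t\in L^\infty_tL^2_x$ from Lemma \ref{y2}; this already yields $\|\u\|_{L^\infty_tH^3_x}\le C$. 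For $k=4$, using the Banach algebra property of $H^2(\R^3)$ together with the elliptic identity $\|\u_t\|_{H^2}\le C(\|\sqrt{\r}\u_{tt}\|_{L^2}+\|\nabla\u_t\|_{L^2}+1)$ derived in the proof of Lemma \ref{y2}, one obtains the pointwise-in-time (but not uniform) estimate
$$\|\u(t)\|_{H^4}\le C\bigl(\|\u_t(t)\|_{H^2}+\|\r(t)\|_{H^3}+\|\F(t)\|_{H^3}+1\bigr),$$
which will be used only to absorb top-order terms in the next step.

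Next, set $E(t)=\|\r(t)\|_{H^3}^2+\|\F(t)\|_{H^3}^2$. I would apply $\partial^\alpha$ with $|\alpha|=3$ to the continuity equation and to the equation for $\F$, test against $\partial^\alpha\r$ and $\partial^\alpha\F$, and integrate. The convective term yields, after one integration by parts, a factor $\|\Dv\u\|_{L^\infty}$; the commutator $[\partial^\alpha,\u\cdot\nabla]$ acting on $\r$ or $\F$ is controlled via a Kato--Ponce estimate combined with the Sobolev embedding $H^2\hookrightarrow L^\infty$ in $\R^3$ (giving $\|\nabla f\|_{L^\infty}\le C\|f\|_{H^3}$) by $C(\|\nabla\u\|_{L^\infty}+\|\u\|_{H^3})E(t)$. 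The Moser estimate for $\partial^\alpha(\r\Dv\u)$ and $\partial^\alpha(\nabla\u\,\F)$ generates a genuinely top-order contribution containing $\|\u\|_{H^4}$, which is replaced using the bound from the first step. After Young's inequality, the sum of the two transport identities becomes
$$\frac{d}{dt}E(t)\le C\bigl(1+\|\nabla\u(t)\|_{L^\infty}+\|\u_t(t)\|_{H^2}^2\bigr)E(t)+C\bigl(1+\|\u_t(t)\|_{H^2}^2\bigr).$$

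Finally, Lemma \ref{y2} together with the elliptic estimate for $\|\u_t\|_{H^2}$ give $\int_0^{T^*}\|\u_t\|_{H^2}^2\,dt\le C$, while the standing assumption \eqref{v7} gives $\int_0^{T^*}\|\nabla\u\|_{L^\infty}\,dt<\infty$. Gronwall's inequality then yields $E(t)\le C$ on $[0,T^*]$, so $\r,\F\in L^\infty_tH^3_x$, and combining with the first step we also get $\u\in L^\infty_tH^3_x$. The main obstacle is precisely the coupling just exploited: the transport estimates at level $H^3$ require control of $\nabla^4\u$, which is not pointwise bounded a priori; this forces the use of the Lam\'e elliptic estimate at level $k=4$, which reintroduces $\|\r\|_{H^3}$ and $\|\F\|_{H^3}$ on the right-hand side but only multiplied by factors whose time integrability is assured by \eqref{v7} and Lemma \ref{y2}.
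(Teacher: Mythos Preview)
Your proof is correct and follows essentially the same route as the paper: obtain $\u\in L^\infty_tH^3_x$ from the Lam\'e elliptic estimate at level $k=3$, use Lemma~\ref{y2} to get $\u_t\in L^2_tH^2_x$ (equivalently $G\in L^2_tH^2_x$), and then close the $H^3$ transport estimates on $\r$ and $\F$ via Gronwall after feeding the top-order $\nabla^4\u$ term back through the elliptic estimate at level $k=4$. The paper is terser---it simply writes ``apply a similar argument in Lemma~\ref{y1}''---but the mechanism you spell out with the Kato--Ponce commutator and Moser product estimates is exactly what that phrase unpacks to.
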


\begin{proof}
It follows from Lemma \ref{y1} and Lemma \ref{y2} that
$$G\in L^\infty_t H^1_x,\quad \nabla\r^\gamma\in L^\infty_t H^1_x,\quad
\Dv(\r\F\F^\top)\in L^\infty_t H^1_x,$$ 
which gives
$$\mu\D\u+(\lambda+\mu)\nabla\Dv\u=G+A\nabla\r^\gamma-\Dv(\r\F\F^\top)\in L^\infty_t H^1_x.$$ 
From the standard estimate for elliptic equations, we have
$$\|\u\|_{L^\infty_t H^3_x}\le C.$$

From the second equation in \eqref{cve}, Lemma \ref{y1} and Lemma
\ref{y2}, we have
$$\mu\D\u_t+(\lambda+\mu)\nabla\Dv\u_t=G_t+A\nabla(\r^\gamma)_t-\Dv(\r\F\F^\top)_t\in L^2((0,T)\times \R^3).$$
Therefore, again from the standard estimate for elliptic
equations, we have
$$\u_t\in L^2_t H^2_x,\quad G\in L^2_t H^2_x.$$

With those bounds, we can apply a similar argument in Lemma
\ref{y1} to deduce from the first and the third equation in
\eqref{cve} that
$$\|\r\|_{L^\infty_t H^3_x}+\|\F\|_{L^\infty_t H^3_x}\le C.$$
The proof is complete.
\end{proof}

Now we are able to state the proof of Theorem \ref{vMT}.

\begin{proof}[Proof of Theorem \ref{vMT}]
From the previous lemmas, the functions $(\r, \F, \u)|_{t=T^*}$
satisfy the same conditions as the initial data. Moreover
$$G=\r\u_t+\r\u\cdot\nabla\u\in L^\infty_t H^1_x$$
and
$$(-\mu\D\u-(\lambda+\mu)\nabla\Dv\u+A\nabla\r^\gamma)|_{t=T^*}=\r (T^*)g$$
with $$g\in H^1(\R^3), \quad\text{and}\quad \sqrt{\r(T^*)}g\in L^2.$$ Therefore, we
can take $(\r,\F,\u)(T^*)$ as the initial data and apply the local
existence theorem to extend our local classical solution beyond
$T^*$, and this contradicts the assumption on $T^*$.
The proof is complete.
\end{proof}

\bigskip\bigskip

\section*{Acknowledgments}

X. Hu's research was supported in part by the National Science Foundation.
D. Wang's research was supported in part by the National Science
Foundation and  the Office of Naval Research. 

\bigskip\bigskip

\end{document}